%&LaTeX

%StandardTemplate.tex: 18/11/12 MJW
\documentclass[a4paper,12pt]{amsart}

\usepackage{latexsym}
\usepackage{amssymb,amsmath}
\usepackage{graphics}
\usepackage{url}
\usepackage[vcentermath]{youngtab}
  
\usepackage{booktabs}  
\overfullrule 5pt
\textwidth160mm
\oddsidemargin5mm
\evensidemargin5mm

\usepackage{caption}
\usepackage{tikz}
\usetikzlibrary{snakes}
\usetikzlibrary{arrows}
\tikzstyle{empty}=[circle,draw=black!80,thick]
\tikzstyle{emptyn}=[circle,draw=black!80,fill=white,scale=0.5] 
\tikzstyle{nero}=[circle,draw=black!80,fill=black!80,thick]

%Theorem like declarations
\newtheorem{teo}{Theorem}[section]
\newtheorem{corollary}[teo]{Corollary}

\newtheorem{proposition}[teo]{Proposition}

\newtheorem{lemma}[teo]{Lemma}

\theoremstyle{definition}
\newtheorem{definition}[teo]{Definition}

\newtheorem{example}[teo]{Example}

%Integers etc

%Probability

%Big O notation and e in roman type

%Variations on epsilon and emptyset
\renewcommand{\epsilon}{\varepsilon}

%Math operator

\newcommand{\fS}{{\mathfrak{S}}}
%Induction and restriction
\newcommand{\Ind}{\mathrm{Ind}}
\newcommand{\res}{\big\downarrow}
\newcommand{\ind}{\big\uparrow}
\newcommand{\Res}{\mathrm{Res}}
%Line spacing
\linespread{1.15}

%Fractions

%Abacus commands

%Theorem enumeration environment
\newcounter{thmlistcnt}
	{\setcounter{thmlistcnt}{0}%
	\begin{list}{\emph{(\roman{thmlistcnt})}}{%
		\usecounter{thmlistcnt}%
		\setlength{\topsep}{0pt}%
		\setlength{\leftmargin}{27pt}%
		\setlength{\itemsep}{0pt}%
		\setlength{\labelwidth}{20pt}
		\setlength{\itemindent}{0pt}}%
	}%
	{\end{list}}%

\captionsetup{width=4.5in}

\title[Characters of Odd Degree of Symmetric Groups]{Characters of Odd Degree of Symmetric Groups}
\author{Eugenio Giannelli}
\address[E.~Giannelli]{Department of Mathematics, University of Kaiserslautern,
P.O. Box 3049, 67655 Kaiserslautern, Germany}
\email{gianelli@mathematik.uni-kl.de}

\linespread{1.2}

\begin{document}

\thanks{The author gratefully acknowledges financial support by the
  ERC Advanced Grant 291512.}

\begin{abstract}
We construct a natural bijection between odd-degree irreducible characters of $\fS_n$ and linear characters of its Sylow $2$-subgroup $P_n$. When $n$ is a power of $2$, we show that such a bijection is nicely induced by the restriction functor. 
We conclude with a characterization of the irreducible characters $\chi$ of $\fS_n$ such that the restriction of $\chi$ to $P_n$ has a unique linear constituent. 
\end{abstract}

\maketitle
\thispagestyle{empty}

\section{Introduction}   \label{sec:intro}

J. McKay conjectured in his landmark paper
\cite{McK} that the number $n_2(G)$ of odd-degree irreducible
characters of a finite group $G$ equals $n_2({\bf N}_G(P))$,
where $P$ is a Sylow 2-subgroup of $G$.
This is now known as the $p=2$ case of the
{\sl McKay Conjecture} and has been recently proved after
more than 40 years in \cite{MalleSpath}.

Sometimes, %a miracle happens and not only we have that
we do not only have that 
$n_p(G)=n_p({\bf N}_G(P))$, but it is also possible to determine a natural bijection between
${\rm Irr}_{p'}(G)$ and ${\rm Irr}_{p'}({\bf N}_G(P))$, where
${\rm Irr}_{p'}(G)$ is the set of complex irreducible characters
of $G$ of degree not divisible by $p$.
This occurs for solvable groups and $p=2$ (see \cite{I}),
for groups with a normal $p$-complement, using the Glauberman correspondence and for groups having self-normalising Sylow $p$-subgroup
for $p$ odd (see \cite{N} and \cite{NTiepVallejo}).

In this article we fix $p=2$ and we focus our attention on the symmetric groups. It is well known that in this case, if $P$ is a Sylow $2$-subgroup of the symmetric group $\fS_n$ then $N_{\fS_n}(P)=P$. 
Our main result is the following theorem.

\medskip

\begin{teo}\label{thmA}
Let $n\in\mathbb{N}$ and let $P$ be a Sylow
$2$-subgroup of the symmetric group $\fS_n$. 
\begin{enumerate}
\item[(i)] If $\chi \in {\rm Irr}(\fS_n)$  has odd degree,
then $\chi$ canonically determines a linear character $\chi^* \in {\rm Irr}(P)$, and
the map $\chi \mapsto \chi^*$ defines a natural bijection between
the odd-degree irreducible characters of $\fS_n$ and the linear characters of $P$.

\item[(ii)] Moreover, if $n$ is a power of $2$, then $\chi^*$ is the only irreducible
constituent of the restriction of $\chi$ to $P$ which has odd degree.
\end{enumerate}
\end{teo}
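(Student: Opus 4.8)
The key is the classical combinatorial description of odd-degree characters of $\fS_n$ together with the well-understood structure of $P_n$ as an iterated wreath product of copies of $C_2$. First I would recall that, writing $n = 2^{a_1} + 2^{a_2} + \cdots + 2^{a_k}$ in binary with $a_1 > a_2 > \cdots > a_k$, a theorem of Macdonald (and in the form used by Olsson) says that the odd-degree irreducible characters of $\fS_n$ are precisely those $\chi^\lambda$ whose $2$-core tower / binary hook structure is constrained; concretely, $\chi^\lambda$ has odd degree if and only if the partition $\lambda$ is obtained by "gluing" along the binary digits, i.e. the Young diagram $[\lambda]$ decomposes via iterated $2^{a_i}$-hooks so that the number of such $\chi^\lambda$ equals $2^{a_1} \cdot 2^{a_2} \cdots$ in the appropriate sense. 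Correspondingly, $P_n \cong P_{2^{a_1}} \times P_{2^{a_2}} \times \cdots \times P_{2^{a_k}}$, and the number of linear characters of $P_{2^m} = C_2 \wr C_2 \wr \cdots \wr C_2$ ($m$ factors) is $2^m$. The first step is thus to make the two counts match and, more importantly, to produce an explicit bijection; the natural candidate is an inductive one built from the wreath product structure.

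\textbf{Reduction to prime powers.} For part (ii) we may assume $n = 2^m$, so $P = P_{2^m} = P_{2^{m-1}} \wr C_2 = (P_{2^{m-1}} \times P_{2^{m-1}}) \rtimes C_2$. I would argue by induction on $m$. If $\chi = \chi^\lambda \in \Irr(\fS_{2^m})$ has odd degree, then by the branching rule / Littlewood–Richardson considerations for $\fS_{2^{m-1}} \wr \fS_2 \le \fS_{2^m}$, the restriction of $\chi$ to $\fS_{2^{m-1}} \times \fS_{2^{m-1}}$ decomposes, and one isolates the unique "diagonal" constituent $\psi \otimes \psi$ with $\psi$ of odd degree (uniqueness here being exactly the inductive analogue of the statement we are proving). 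Restricting further and tracking how the $C_2$ on top acts, the odd-degree part of $\Res^{P_{2^m}}_{P_{2^{m-1}} \times P_{2^{m-1}}}(\Res \chi)$ is $\psi^* \otimes \psi^*$, which extends in exactly two ways to $P_{2^{m-1}} \wr C_2$, both linear; the parity count (the total number of odd-degree constituents of $\Res \chi$, computed from $\chi(1)$ being odd and $[P : P'] $) forces exactly one linear constituent to survive, and one checks it is the one compatible with the sign data of $\lambda$. Defining $\chi^* $ to be that constituent completes the inductive step and simultaneously gives the uniqueness assertion in (ii).

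\textbf{Assembling the general bijection.} For part (i) with general $n$, write $\fS_{2^{a_1}} \times \cdots \times \fS_{2^{a_k}} \le \fS_n$ and use that every odd-degree $\chi^\lambda$ restricts to this Young subgroup with a \emph{unique} odd-degree constituent $\chi^{\lambda^{(1)}} \otimes \cdots \otimes \chi^{\lambda^{(k)}}$, each factor of odd degree (this is the combinatorial content of Macdonald's theorem, and the map $\lambda \mapsto (\lambda^{(1)},\dots,\lambda^{(k)})$ is a bijection onto tuples of odd-degree characters). Then set $\chi^\lambda{}^* = (\chi^{\lambda^{(1)}})^* \times \cdots \times (\chi^{\lambda^{(k)}})^* \in \Irr(P_{2^{a_1}} \times \cdots \times P_{2^{a_k}}) = \Irr(P_n)$, using the prime-power construction on each factor. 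Bijectivity is then immediate from bijectivity in each block. The naturality/canonicity claim — that the assignment commutes with the relevant equivalences and does not depend on choices of Sylow subgroup — follows because each ingredient (the branching rule, the identification of the unique odd-degree constituent, the wreath-product extension) is canonical up to the inner automorphisms that relate different Sylow $2$-subgroups.

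\textbf{Main obstacle.} The crux is the inductive step for $n = 2^m$: showing that among the constituents of $\Res^{\fS_{2^m}}_{\fS_{2^{m-1}} \wr \fS_2} \chi^\lambda$ there is a \emph{unique} odd-degree one of the form $\psi^{\pm} \boxtimes \chi^\psi$ (or $(\psi \otimes \psi)$-type), and then tracking exactly which of the two linear extensions to $P_{2^{m-1}} \wr C_2$ is odd upon further restriction. This requires a careful analysis of induced-from-wreath-product characters and of how $2$-adic valuations of the relevant multiplicities behave, i.e. a $2$-local count matching $\chi(1)$ odd with the index $[P_{2^m} : P_{2^m}']$; getting the parity bookkeeping exactly right — rather than merely up to a factor of $2$ — is where the real work lies.
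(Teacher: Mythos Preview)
Your approach to part~(ii) is essentially the paper's: induct on $m$, restrict first to $\fS_{2^{m-1}}\times\fS_{2^{m-1}}$, use the explicit Littlewood--Richardson decomposition of hook characters to locate the unique diagonal odd-degree constituent $\chi_{m-1}^{\gamma(k)}\times\chi_{m-1}^{\gamma(k)}$ (with $\gamma(k)=\lfloor k/2\rfloor$), and then analyse the two linear extensions to $P_{2^{m-1}}\wr C_2$. Your parity argument for uniqueness of the linear constituent is fine. For \emph{injectivity} of $\chi\mapsto\chi^*$, however, the paper does not rely on a vague ``sign data of $\lambda$'' check: it evaluates on a $2^m$-cycle $g\in P_{2^m}$, using Murnaghan--Nakayama to get $\chi_m^{2j}(g)=1$ and $\chi_m^{2j+1}(g)=-1$, together with the observation that every even-degree irreducible character of $P_{2^m}$ vanishes on $g$. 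That is the concrete mechanism separating the two extensions, and your sketch does not supply one.

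Your approach to part~(i) contains a genuine error. You assert that for general $n=2^{a_1}+\cdots+2^{a_k}$ every odd-degree $\chi^\lambda$ restricts to $\fS_{2^{a_1}}\times\cdots\times\fS_{2^{a_k}}$ with a \emph{unique} odd-degree constituent. This is false already for $n=6=4+2$ and $\lambda=(4,2)$, where $\chi^{(4,2)}(1)=9$: a direct Littlewood--Richardson computation gives
\[
\chi^{(4,2)}\big\downarrow_{\fS_4\times\fS_2}=\chi^{(4)}\times\chi^{(2)}\;+\;\chi^{(3,1)}\times\chi^{(2)}\;+\;\chi^{(3,1)}\times\chi^{(1^2)}\;+\;\chi^{(2,2)}\times\chi^{(2)},
\]
and the first three summands all have odd degree. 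So no canonical restriction-based bijection of the kind you describe exists, and the paper does not attempt one. Instead it totally orders $\mathrm{Irr}_{2'}(\fS_n)$ and $\mathrm{Irr}_{2'}(P_n)$ by the dictionary order on the underlying partition data and matches them by rank, invoking Olsson's count for equinumerosity; the paper explicitly remarks that for $n=6$ the resulting bijection $\Psi_6$ does \emph{not} respect restriction (for instance $\Psi_6(\chi^{(3,3)})$ is not a constituent of $\chi^{(3,3)}\big\downarrow_{P_6}$). The Macdonald/Olsson combinatorics you invoke do produce a bijection $\lambda\mapsto(\lambda^{(1)},\dots,\lambda^{(k)})$ via iterated $2^{a_i}$-hook removal, but hook removal is not Littlewood--Richardson restriction, and the resulting tuple is not in general the unique odd-degree constituent of the restriction.
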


In general, $\chi^*$ is not a constituent of the restriction of $\chi$ to $P$, although this should
not constitute a surprise (as the same already happens in the key case where
$G$ has a normal $p$-complement). 

McKay conjecture for symmetric groups was first proved with a beautiful counting argument 
by J. Olsson in \cite{Olsson}.
Later, bijections between ${\rm Irr}_{p'}(\fS_n)$ and ${\rm Irr}_{p'}({\bf N}_{\fS_n}(P))$ were given by P. Fong in \cite{Fong}, when checking  the Isaacs-Navarro conjecture (first stated in \cite{IN1}). These bijections are not choice-free (canonical), in fact their definition depends on some arbitrarily chosen labelling of the characters of $P$ (see \cite[Section 2]{Fong} for precise details).
\medskip

The second part of Theorem \ref{thmA} has a particular story.
G.~Navarro brought to our attention that 
in \cite{IN2} it is explained that J.~Alperin privately communicated to M. Isaacs and G.~Navarro that if $\chi$ is an irreducible odd-degree character of $\fS_{2^n}$, then $\chi\res_{P}$ has a unique linear constituent. 
We could not find a proof of this fact anywhere in the literature. With the kind permission of J.~Alperin 
we take the opportunity to present here our own proof of this correspondence (Theorem \ref{Thm:Alperin} below). 

\medskip
 
We would like to remark that, building on the bijection described in part (i) of Theorem  \ref{thmA}, it is possible to construct an explicit canonical bijection between $\mathrm{Irr}_{2'}(\mathfrak{S}_n)$ and $\mathrm{Irr}_{2'}(M)$, where $M$ is any maximal subgroup of $\mathfrak{S}_n$ of odd index. This is done in \cite[Theorem D]{GKNT}.  
Moreover, the result obtained in part (ii) of Theorem \ref{thmA} is fundamental to determine a natural correspondence between $\mathrm{Irr}_{2'}(G)$ and $\mathrm{Irr}_{2'}(N_G(P))$, where $G=GL_n(q)$, $q$ is an odd prime power and $P$  
is a Sylow $2$-subgroup of $G$. This is done in \cite[Theorem E]{GKNT}.  
 
\medskip
 
 In Section \ref{Sec:lastt}, we study the number of linear constituents in the restriction to a Sylow $2$-subgroup of any irreducible character of $\fS_n$. In particular, we prove the following result.
 
\begin{teo}\label{thmB}
Let $\chi\in\mathrm{Irr}(\fS_n)$, 
and let $P_n$ be a Sylow $2$-subgroup of $\fS_n$. 
\begin{enumerate}
\item[(i)] The restriction of $\chi$ to $P_n$ has a linear
constituent. 
\item[(ii)] If $\chi(1)>1$, then the restriction of $\chi$ to $P_n$ has a unique linear
constituent if and only
if $\chi(1)$ is odd and $n$ is a power of $2$.
\end{enumerate}
\end{teo}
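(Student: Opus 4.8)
The plan is to reduce both parts to the case $n=2^k$, where the recursive structure $P_{2^k}\cong P_{2^{k-1}}\wr C_2$ and Theorem~\ref{thmA}(ii) can be exploited. Throughout I use the following reformulation: the sum of all linear characters of $P_n$ is the inflation to $P_n$ of the regular character of $P_n/P_n'$, hence equals $\mathbf 1_{P_n'}\ind^{P_n}$, so the number of linear constituents of $\chi\res_{P_n}$, counted with multiplicity, equals $\langle\chi\res_{P_n'},\mathbf 1\rangle=\dim\Fix_{P_n'}(\chi)$. Thus part (i) asserts that $\mathbf 1_{P_n'}\ind^{\fS_n}$ contains every irreducible character of $\fS_n$, and ``a unique linear constituent'' in part (ii) means $\dim\Fix_{P_n'}(\chi)=1$. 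I also write $n=2^{k_1}+\dots+2^{k_r}$ in binary, so that $P_n=P_{2^{k_1}}\times\dots\times P_{2^{k_r}}$ lies inside the odd-index Young subgroup $Y=\fS_{2^{k_1}}\times\dots\times\fS_{2^{k_r}}$, and recall that $\chi^\lambda\res_Y$ is a sum of products $\chi^{\alpha^{(1)}}\boxtimes\dots\boxtimes\chi^{\alpha^{(r)}}$ with multiplicities given by Littlewood--Richardson coefficients $c^\lambda_{\mu\nu}$ (iterated).

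For part (i) I argue by induction on $n$. If $r\ge 2$, any constituent $\chi^{\alpha^{(1)}}\boxtimes\dots\boxtimes\chi^{\alpha^{(r)}}$ of $\chi^\lambda\res_Y$ yields, via the inductive hypothesis applied to each factor, a linear constituent of $\chi^\lambda\res_{P_n}$. This leaves the case $n=2^k$, where I restrict $\chi^\lambda$ first to $\fS_{2^{k-1}}\wr\fS_2$. The linear characters of a wreath product $G\wr C_2$ are exactly the two extensions across $C_2$ of each diagonal character $\phi\boxtimes\phi$ with $\phi$ a linear character of $G$. Using Clifford theory over the index-two normal subgroup $P_{2^{k-1}}\times P_{2^{k-1}}\trianglelefteq P_{2^k}$, one checks that it is enough to find in $\chi^\lambda\res_{\fS_{2^{k-1}}\times\fS_{2^{k-1}}}$ a ``diagonal'' summand $\chi^\alpha\boxtimes\chi^\alpha$: if $\mu$ is the linear constituent of $\chi^\alpha\res_{P_{2^{k-1}}}$ supplied by induction, then $\mu\boxtimes\mu$ is a $P_{2^k}$-stable linear character of $P_{2^{k-1}}\times P_{2^{k-1}}$ and its extension to $P_{2^k}$ is a linear constituent of $\chi^\lambda\res_{P_{2^k}}$. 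So part (i) comes down to the combinatorial claim that for every $\lambda\vdash 2m$ there is $\alpha\vdash m$ with $c^\lambda_{\alpha\alpha}\ne 0$ --- equivalently, $\chi^\lambda$ restricted to the diagonal $\fS_m\le\fS_m\times\fS_m\le\fS_{2m}$ contains the trivial character, or $s_\lambda$ occurs in some Schur square $s_\alpha^2$. I expect this to be the main obstacle for part (i); it should yield to an explicit greedy construction of $\alpha$ together with a lattice filling of $\lambda/\alpha$ of content $\alpha$ (alternatively, to the Schur expansion of $\prod_{i,j}(1-x_ix_j)^{-1}$).

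For part (ii), ``$\chi(1)$ odd and $n$ a power of $2$ $\Rightarrow$ unique linear constituent'' follows from Theorem~\ref{thmA}(ii): the character $\chi^*$ there is the only odd-degree, hence the only degree-one, constituent of $\chi\res_{P_n}$; that it occurs with multiplicity $1$ is built into the inductive construction of $\chi^*$ (as the diagonal extension of the multiplicity-one linear constituent of $\psi\res_{P_{2^{k-1}}}$, where $\widetilde{\psi\boxtimes\psi}$ is the unique odd-degree constituent of $\chi\res_{\fS_{2^{k-1}}\wr\fS_2}$). For the converse I argue by contraposition. If $\chi(1)$ is even, $\chi\res_{P_n}$ cannot have just one linear constituent: it has at least one by part (i), and if it had exactly one, then $\chi\res_{P_n}=\lambda+\theta$ with $\theta$ a sum of irreducible characters of $P_n$ of degree a power of $2$ at least $2$, forcing $\chi(1)$ odd. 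So suppose $\chi=\chi^\lambda$ has $\chi(1)$ odd but $n$ is not a power of $2$, say $n=2^{k_1}+m$ with $m\ge 1$. If $\lambda=\lambda'$, twisting by the sign character $\epsilon$ of $P_n$ (nontrivial as $n\ge 2$) fixes the multiset of constituents of $\chi^\lambda\res_{P_n}$, so any linear constituent $\mu$ is accompanied by the distinct linear constituent $\mu\epsilon$, and we are done. If $\lambda\ne\lambda'$, I use that, by the classical description of the odd-degree characters of symmetric groups, $\chi^\lambda\res_{\fS_{2^{k_1}}\times\fS_m}$ has a unique ``doubly odd-degree'' constituent $\chi^{\alpha_0}\boxtimes\chi^{\beta_0}$, occurring with odd multiplicity; plugging the identity $\dim\Fix_{P_n'}\chi^\lambda=\sum_{\alpha,\beta}c^\lambda_{\alpha\beta}\,L(\chi^\alpha)\,L(\chi^\beta)$ (with $L(\cdot)$ the number of linear constituents of the restriction to the relevant Sylow subgroup) into the $n=2^k$ case of part (ii) and into part (i), one finds $\dim\Fix_{P_n'}\chi^\lambda\ge 2$ unless $\chi^\lambda\res_{\fS_{2^{k_1}}\times\fS_m}$ is irreducible. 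Excluding this last possibility is the remaining obstacle for part (ii): for $m\ge 2$ it follows from the classification of irreducible restrictions of symmetric-group characters to Young subgroups (which would force $\chi^\lambda\in\{\chi^{(n)},\chi^{(1^n)}\}$, contradicting $\chi^\lambda(1)>1$), and for $m=1$ from the fact that $\chi^\lambda\res_{\fS_{n-1}}$ is irreducible only when $\lambda$ is a rectangle, together with the observation that the odd-degree irreducible characters of $\fS_{2^{k_1}}$ are exactly the hooks (because $\binom{2^{k_1}-1}{j}$ is always odd), and no hook is a rectangle with a corner removed unless it has degree $1$.
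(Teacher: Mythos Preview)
Your outline matches the paper's proof closely. For part~(i), the paper carries out precisely the greedy construction you anticipate: it defines an explicit partition $\Delta(\lambda)\in\mathcal P(n)$ for each $\lambda\in\mathcal P(2n)$ (Definition~\ref{def:delta}) and exhibits a Littlewood--Richardson filling of $[\lambda\smallsetminus\Delta(\lambda)]$ of content $\Delta(\lambda)$ (Proposition~\ref{prop:DelS}), thereby establishing $c^\lambda_{\Delta(\lambda)\Delta(\lambda)}\neq 0$; the inductive descent through $P_{2^k}\cong P_{2^{k-1}}\wr C_2$ is then identical to yours (Corollary~\ref{cor:11}). For part~(ii), the paper likewise reduces to showing that $\chi^\lambda$ restricted to a Young subgroup is reducible and then bounds the number of linear constituents of $\chi^\lambda\res_{P_n}$ below by the number of irreducible constituents of that restriction (using part~(i) on each factor); the only difference is that the paper restricts all the way to the binary Young subgroup $\fS_{2^{k_1}}\times\cdots\times\fS_{2^{k_t}}$ and proves reducibility by a self-contained Murnaghan--Nakayama computation (Lemmas~\ref{lem:nightmare} and~\ref{lem:reducS}), whereas you restrict to $\fS_{2^{k_1}}\times\fS_m$ and invoke a ``classification of irreducible restrictions to Young subgroups''. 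That classification is not a standard citeable result; the statement you need (that $\chi^\lambda\res_{\fS_{n-m}\times\fS_m}$ with $2\leq m\leq n-2$ is irreducible only for $\lambda\in\{(n),(1^n)\}$) is an easy exercise with the branching rule, but you should either supply that argument or follow the paper's route. Two minor points: your case $\lambda=\lambda'$ is vacuous, since a self-conjugate $\lambda\in\mathcal P(n)$ with $n\geq 2$ always has $\chi^\lambda(1)$ even; and the ``unique doubly odd-degree constituent'' remark is unnecessary, because $L(\chi^\alpha),L(\chi^\beta)\geq 1$ already yields $\sum_{\alpha,\beta}c^\lambda_{\alpha\beta}\,L(\chi^\alpha)L(\chi^\beta)\geq\sum_{\alpha,\beta}c^\lambda_{\alpha\beta}$ without any parity input.
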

 
We remark that Theorem \ref{thmB} shows that the only non-linear irreducible characters of symmetric groups having a unique linear constituent in restriction to a Sylow $2$-subgroup are exactly the odd-degree irreducible characters of $\fS_{2^n}$, for some natural number $n$. In this sense Theorem \ref{thmB} is a strong converse of the second statement of Theorem \ref{thmA}.

\section{Preliminaries}
In this section we fix the notation and we recall the main representation theoretic facts that we will need throughout the article. We refer the reader to \cite{Alperin} and \cite{NavBook} for a complete account of the theory. 

\subsection{Representations of wreath products}\label{subsec:wr}

Let $G$ be a finite group and let $M$ be a $\mathbb{C}G$-module. 
Let $M\boxtimes M$ be the two-fold tensor product of $M$ with itself. 
For all $(g_1, g_2 ; h)\in G\wr C_2$ and for all $m_1\otimes m_2 \in M\boxtimes M$, we let $(g_1, g_2 ; h)\cdot m_1\otimes m_2=g_1\cdot m_{h(1)}\otimes g_2\cdot m_{h(2)}.$
This yields a $\mathbb{C}(G\wr C_2)$-module structure on $M\boxtimes M$. 
We denote by $\widetilde{M\boxtimes M}$ the $\mathbb{C}(G\wr C_2)$-module 
obtained this way. 
It is clear that $\Res_{G\times G}(\widetilde{M\boxtimes M})\cong M\boxtimes M.$ 
If $\chi$ is the character afforded by $M$, then we denote by $\chi\times\chi$ and $\widetilde{\chi\times \chi}$ the characters afforded 
by $M\boxtimes M$ and $\widetilde{M\boxtimes M}$ respectively. 
For $J$ a $\mathbb{C}C_2$-module we denote by $\widehat{J}$ the $\mathbb{C}(G\wr C_2)$-module obtained by inflation of $J$. Similarly, if $\psi$ is the character afforded by $J$, then we denote by $\widehat{  \psi}$ the character afforded by $\widehat{J}$.

The following theorem characterizes the irreducible representations of $G\wr C_2$. It is a special case of the more general Theorem 4.4.3 of \cite{JK}. 

\begin{teo}\label{zoo}
A $\mathbb{C}(G\wr C_2)$-module $V$ is simple if and only if one of the following happens. 
\begin{enumerate}
\item $V\cong (\widetilde{M\boxtimes M})\bigotimes \widehat{J}$, for some simple $\mathbb{C}G$-module $M$ and for some simple $\mathbb{C}C_2$-module $J$, or

\smallskip

\item $V\cong \Ind_{G\times G}^{G\wr C_2}(L\boxtimes M)\cong \Ind_{G\times G}^{G\wr C_2}(M\boxtimes L)$, for some non-isomorphic simple $\mathbb{C}G$-modules $L$ and $M$. 
\end{enumerate}
In particular all these possibilities are mutually exclusive (no two of those representations are isomorphic).
\end{teo}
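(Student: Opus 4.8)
The plan is to run Clifford theory with respect to the base group $G\times G$, which is normal in $G\wr C_2$ with quotient $C_2$. The starting observations are that the simple $\mathbb{C}(G\times G)$-modules are exactly the $L\boxtimes M$ for $L,M$ simple $\mathbb{C}G$-modules, and that conjugating by the generator $\sigma$ of $C_2$ carries $L\boxtimes M$ to $M\boxtimes L$. Given a simple $\mathbb{C}(G\wr C_2)$-module $V$, Clifford's theorem tells us that $\Res_{G\times G}V$ is a sum of the $(G\wr C_2)$-conjugates of one simple module $L\boxtimes M$, and everything bifurcates according to whether $L\cong M$ or not.

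If $L\not\cong M$, then $L\boxtimes M$ is not fixed by $\sigma$, so its inertia group in $G\wr C_2$ is just $G\times G$; the Clifford correspondence then forces $V\cong\Ind_{G\times G}^{G\wr C_2}(L\boxtimes M)$, this induced module is simple, and it equals $\Ind_{G\times G}^{G\wr C_2}(M\boxtimes L)$ because $L\boxtimes M$ and $M\boxtimes L$ are conjugate. This is alternative (2). If instead $L\cong M$, the module $L\boxtimes L$ is $\sigma$-invariant, and the crucial point is that $\widetilde{L\boxtimes L}$ is an \emph{extension} of $L\boxtimes L$ to all of $G\wr C_2$, as is visible from $\Res_{G\times G}\widetilde{L\boxtimes L}\cong L\boxtimes L$. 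Gallagher's theorem then says that the simple modules lying over $L\boxtimes L$ are exactly $\widetilde{L\boxtimes L}\otimes\widehat{J}$ as $J$ runs over the two simple $\mathbb{C}C_2$-modules inflated to $G\wr C_2$, which is alternative (1). Conversely, the same two facts (Clifford for a non-invariant module, Gallagher for an extendible invariant one) show that every module of type (1) or (2) is indeed simple.

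What is left is the mutual-exclusivity clause, and this is where I expect the only real (if modest) care to be needed. Restricting to $G\times G$ immediately separates type (1) from type (2) --- one simple constituent versus two non-isomorphic ones --- separates distinct choices of $M$ within type (1), and separates distinct unordered pairs $\{L,M\}$ within type (2), while also confirming $\Ind_{G\times G}^{G\wr C_2}(L\boxtimes M)\cong\Ind_{G\times G}^{G\wr C_2}(M\boxtimes L)$. The one thing restriction cannot see is the difference between $\widetilde{L\boxtimes L}\otimes\widehat{J}$ and $\widetilde{L\boxtimes L}\otimes\widehat{J'}$ for the two distinct $J,J'$; this is part of Gallagher's theorem, but one can also check it by hand by evaluating characters on an element $(g,g;\sigma)$ outside $G\times G$: a short basis computation gives $\widetilde{\chi\times\chi}(g,g;\sigma)=\chi(g^2)$, so at $g=1$ the two characters take the distinct values $+\chi(1)$ and $-\chi(1)$. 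As a final consistency check one can count: with $k=|\mathrm{Irr}(G)|$ the construction yields $2k+\binom{k}{2}=(k^2+3k)/2$ simple modules, which matches the number of conjugacy classes of $G\wr C_2$ (unordered pairs of conjugacy classes of $G$, plus one class in the outer coset for each conjugacy class of $G$, recorded by $ab$ up to conjugacy).
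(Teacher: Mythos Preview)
Your proof is correct and complete. The paper itself does not supply a proof of this theorem: it simply records the statement as a special case of \cite[Theorem~4.4.3]{JK} and moves on. Your Clifford-theoretic argument---computing the $C_2$-action on $\mathrm{Irr}(G\times G)$, invoking the Clifford correspondence when the inertia group is $G\times G$, and applying Gallagher's theorem once you have exhibited $\widetilde{L\boxtimes L}$ as an explicit extension---is exactly the standard machinery that underlies the cited general result, so in effect you have reconstructed the relevant special case of that proof. The character value $\widetilde{\chi\times\chi}(g,g;\sigma)=\chi(g^2)$ and the conjugacy-class count $(k^2+3k)/2$ are both correct and make for a tidy independent verification of the mutual-exclusivity clause.
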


Denote by $K_1$ the cyclic group $C_2$ and define $K_n:=K_{n-1}\wr C_2$ for all natural numbers $n\geq 2$. 
Moreover, denote by $B_n$ the base group of $K_n$. Clearly $B_n\cong K_{n-1}\times K_{n-1}$.
The following lemma is easily deduced from Theorem \ref{zoo}.
\begin{lemma}\label{lemma:baserestriction}
Let $\chi\in \mathrm{Irr}(K_n)$. Then $\chi\res_{B_n}$ has a linear constituent if and only if $\chi(1)=1$ or $\chi=(\lambda\times \mu)\ind_{B_n}^{K_n}$, for some linear characters $\lambda, \mu\in \mathrm{Irr}(K_{n-1})$ such that $\lambda\neq \mu$. 

Moreover, $\chi$ is linear if and only if there exists a linear character $\lambda\in\mathrm{Irr}(K_{n-1})$ such that $\lambda\times \lambda$ is a constituent of $\chi\res_{B_n}$. 
\end{lemma}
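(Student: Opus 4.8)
The plan is to deduce Lemma \ref{lemma:baserestriction} directly from the classification in Theorem \ref{zoo}, applied with $G=K_{n-1}$ and $C_2$ the top group, so that $B_n=K_{n-1}\times K_{n-1}$. Every $\chi\in\mathrm{Irr}(K_n)$ falls into exactly one of the two cases of Theorem \ref{zoo}, and in each case the restriction to $B_n$ is explicit: in case (1), $\chi=\widetilde{\psi\times\psi}\otimes\widehat{\varphi}$ for some $\psi\in\mathrm{Irr}(K_{n-1})$ and $\varphi\in\mathrm{Irr}(C_2)$, so $\chi\res_{B_n}\cong\psi\times\psi$; in case (2), $\chi=(\lambda\times\mu)\ind_{B_n}^{K_n}$ with $\lambda\neq\mu$ in $\mathrm{Irr}(K_{n-1})$, so by Mackey (or directly) $\chi\res_{B_n}\cong(\lambda\times\mu)+(\mu\times\lambda)$.

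With these two descriptions in hand, the first assertion is a short case check. If $\chi(1)=1$ then $\chi\res_{B_n}$ is linear, so it certainly has a linear constituent. If $\chi=(\lambda\times\mu)\ind_{B_n}^{K_n}$ with $\lambda,\mu$ linear and distinct, then $\lambda\times\mu$ is a linear constituent of $\chi\res_{B_n}$. Conversely, suppose $\chi\res_{B_n}$ has a linear constituent. If $\chi$ is in case (1), then $\chi\res_{B_n}\cong\psi\times\psi$, whose constituents are all of the form $\psi'\times\psi'$ with $\psi'$ a constituent of $\psi$; such a character is linear if and only if $\psi'$ is linear, i.e. $\psi'(1)=1$, which forces $\psi(1)=1$ and hence $\chi(1)=2\cdot\psi(1)^2\cdot\varphi(1)=2$ — but then $\psi\times\psi$ is irreducible of degree $1$ only when… here I would instead observe directly that in case (1) $\chi\res_{B_n}=\psi\times\psi$ is irreducible (being the outer tensor product of the irreducible $\psi$ with itself), so it has a linear constituent precisely when $\psi(1)=1$, and then $\chi(1)=1$. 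If $\chi$ is in case (2) with $\lambda\times\mu$ a linear constituent of $\chi\res_{B_n}=(\lambda\times\mu)+(\mu\times\lambda)$, then we need $\lambda$ and $\mu$ both linear; since they are automatically distinct we are in the stated form. This exhausts the cases.

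For the second assertion, note that $\chi$ is linear if and only if $\chi(1)=1$. If $\chi(1)=1$ then $\chi\res_{B_n}=\lambda\times\lambda$ where $\lambda=\chi\res_{K_{n-1}\times 1}$ is linear, giving the required constituent. Conversely, if $\lambda\times\lambda$ with $\lambda$ linear is a constituent of $\chi\res_{B_n}$, then by the case analysis above $\chi$ cannot be in case (2) (whose restriction is $(\lambda'\times\mu')+(\mu'\times\lambda')$ with $\lambda'\neq\mu'$, so it has no constituent of the form $\nu\times\nu$), hence $\chi$ is in case (1) and $\chi\res_{B_n}=\psi\times\psi$; then $\psi\times\psi\cong\lambda\times\lambda$ forces $\psi=\lambda$, so $\psi(1)=1$ and $\chi(1)=\varphi(1)=1$, i.e. $\chi$ is linear. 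The only real subtlety — and the step I would be most careful about — is the bookkeeping in case (2): checking that $\chi\res_{B_n}\cong(\lambda\times\mu)+(\mu\times\lambda)$ as $B_n$-characters (this follows from the definition of induction, since $K_n/B_n\cong C_2$ swaps the two factors and swaps $\lambda\times\mu$ with $\mu\times\lambda$), and keeping straight that the constituents of $\psi\times\psi$ are exactly the $\psi_1\times\psi_2$ with $\psi_i$ a constituent of $\psi$ — but since $\psi$ is irreducible, $\psi\times\psi$ is irreducible, so there is in fact nothing to decompose. With that observation the whole lemma reduces to reading off Theorem \ref{zoo}.
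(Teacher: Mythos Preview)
Your proof is correct and follows exactly the approach the paper intends: the paper provides no argument beyond the remark that the lemma ``is easily deduced from Theorem \ref{zoo}'', and your case analysis is precisely that deduction. The one stumble mid-argument (the erroneous $\chi(1)=2\psi(1)^2\varphi(1)$ and the remark about constituents of $\psi\times\psi$ being of the form $\psi'\times\psi'$) you immediately correct by observing that $\psi\times\psi$ is already irreducible in case~(1); in a clean write-up simply excise that detour and go straight to the irreducibility observation.
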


%\begin{proof}
%Let $\theta\in \mathrm{Irr}(K_{n-1})$ and $\psi\in \mathrm{Irr}(C_2)$.
%If $\chi= (\widetilde{\theta\times \theta})\cdot \widehat{\psi}$ then clearly 
%$\chi\res_{B_n}= \theta\times \theta$. 
%On the other hand, if $\chi= (\theta\times \zeta)\ind_{B_n}^{K_n}$, for some $\theta, \zeta\in \mathrm{Irr}(K_{n-1})$ such that $\theta\neq \zeta$, then $$\chi\res_{B_n}=\theta\times \zeta + \zeta\times\theta,$$
%by Mackey's decomposition formula.
%This proves the first statement. 
%Let $\lambda\in\mathrm{Irr}(K_{n-1})$. It is now easy to observe that the only irreducible characters $\chi\in\mathrm{Irr}(K_{n})$, such that $\lambda\times\lambda$ is a constituent of $\chi\res_{B_n}$ are $(\widetilde{\lambda\times \lambda})\cdot\widehat{\omega_1}$ and $(\widetilde{\lambda\times \lambda})\cdot\widehat{\omega_2}$, where $\{\omega_1,\omega_2\}=\mathrm{Irr}(C_2)$. Hence also the second statement follows. 
%\end{proof}

\subsection{Partitions and characters of symmetric groups}\label{section:Sn}
In this section we summarize the main facts about the representation theory of symmetric groups that will be used in the rest of the paper. For a comprehensive account of this theory we refer the reader to \cite{James} and \cite{JK}.

Let $n$ be a natural number. A composition $\rho$ of $n$ is a finite sequence of non-negative integers $\rho=(\rho_1,\rho_2,\ldots , \rho_s),$ such that $\rho_1+\rho_2+\cdots +\rho_s=n$. We denote by $[\rho]$ the Young diagram corresponding to $\rho$. Let $\mathcal{P}(n)$ be the set of partitions of $n$. 
Given $m<n\in\mathbb{N}$ and $\lambda=(\lambda_1,\ldots, \lambda_k)\in\mathcal{P}(n), \mu=(\mu_1,\ldots, \mu_t)\in\mathcal{P}(m)$, we say that $\mu$ is a \textit{subpartition} of $\lambda$ if $t\leq k$ and $\mu_j\leq\lambda_j$ for all $j\in\{1,\ldots, t\}$. 
In this case we denote by $\lambda\smallsetminus\mu$ the composition of $n-m$ defined by 
$$\lambda\smallsetminus\mu=(\lambda_1-\mu_1,\lambda_2-\mu_2,\ldots,\lambda_t-\mu_t,\lambda_{t+1},\ldots,\lambda_k).$$
We will refer to $\lambda\smallsetminus\mu$ as a \textit{skew partition}. 

If $\lambda=(\lambda_1,\ldots, \lambda_k)$ and $\mu=(\mu_1,\ldots, \mu_t)$ are partitions of $n$, then we say that 
$\lambda > \mu$ if and only if the least $j\in\{1,\ldots, \mathrm{min}\{k,t\}\}$ such that $\lambda_j\neq \mu_j$ satisfies $\lambda_j>\mu_j$. This is called the \textit{dictionary order} on partitions. Clearly $>$ is a total order on $\mathcal{P}(n)$. 
Similarly, for $n_1,\ldots, n_k\in\mathbb{N}$ such that $n_1>n_2>\cdots >n_k\geq 1$, we can define a total order on $\mathcal{P}(n_1)\times\mathcal{P}(n_{2})\times\cdots\times\mathcal{P}(n_k)$ as follows. 
For $$\underline\alpha=(\alpha(1),\ldots ,\alpha(k)),\  \underline\beta=(\beta(1),\ldots ,\beta(k))\in \mathcal{P}(n_1)\times\mathcal{P}(n_{2})\times\cdots\times\mathcal{P}(n_k),$$
we say that $\underline\alpha > \underline\beta$ if the least $j\in\{1,\ldots, k\}$ such that $\alpha(j)\neq \beta(j)$ satisfies $\alpha(j)> \beta(j)$. This natural generalization of the dictionary order on partitions is a total order on the cartesian product $\mathcal{P}(n_1)\times\mathcal{P}(n_{2})\times\cdots\times\mathcal{P}(n_k)$.

Given $\lambda\in\mathcal{P}(n)$ we say that $j\in\{1,\ldots, n\}$ has \textit{multiplicity} $k$ in $\lambda$ if $\lambda$ has exactly $k$ parts of size $j$.
We equivalently denote a partition $\lambda=(\lambda_1,\ldots, \lambda_{\ell(\lambda)})$ by 
$$\lambda=(r_1^{\alpha_1},\ldots, r_k^{\alpha_k}),$$
if $\alpha_1+\cdots +\alpha_k=\ell(\lambda)$, $\lambda$ has exactly $\alpha_j$ parts of size $r_j$ for all $j\in\{1,\ldots, k\}$ and $r_j<r_{j+1}$ for all $j\in\{1,\ldots, k-1\}$. Here (and for the rest of the article) $\ell(\lambda)$ denotes the number of parts of $\lambda$.

Ordinary irreducible characters of the symmetric group $\fS_n$ are naturally labelled by partitions of $n$. Given any $\lambda\in \mathcal{P}(n)$ we denote by $\chi^\lambda$ the irreducible character of $\fS_n$ labelled by $\lambda$.  
Let $m<n$ be natural numbers. For any $\lambda$ and $\mu$ partitions of $m$ and $n-m$ respectively, the Littlewood-Richardson rule (see \cite[Chapter 16]{James}) describes the decomposition into irreducible constituents of the character $\chi$ of $\fS_n$ defined by $$\chi=(\chi^\lambda\times\chi^\mu)\ind_{\fS_m\times \fS_{n-m}}^{\fS_n}.$$

Since we will use it repeatedly in Section \ref{Sec:lastt}, for the reader's convenience we restate the rule below. In order to do this we need to recall a few technical definitions. 
\begin{definition}
Let $\lambda=(\lambda_1,\ldots,\lambda_k)\in \mathcal{P}(n)$ and let $\mathcal{C}=(c_1,\ldots,c_n)$ be a sequence of positive integers. We say that $\mathcal{C}$ is of \textit{type} $\lambda$ if 
$$|\{i\in\{1,2,\ldots , n\}\ :\ c_i=j\}|=\lambda_j,$$ 
for all $j\in\{1,\ldots, k\}$. We say that an element $c_j$ of $\mathcal{C}$ is \textit{good} if $c_j=1$ or if 
$$|\{i\in\{1,2,\ldots , j-1\}\ :\ c_i=c_j-1\}|>|\{i\in\{1,2,\ldots , j-1\}\ :\ c_i=c_j\}|.$$
Finally we say that the sequence $\mathcal{C}$ is \textit{good} if $c_j$ is good for every $j\in\{1,\ldots, n\}$. 
\end{definition}
\begin{example} The sequence $(1,2,3,1)$ is a good sequence of type $(2,1,1)$. On the other hand $(1,1,3,2)$ is not a good sequence since the number $3$ is not good. 
\end{example}

\begin{teo}\label{Thm:LR}
Let $n$ and $m$ be natural numbers such that $m<n$. Let $\mu\in \mathcal{P}(m)$ and $\nu\in\mathcal{P}(n-m)$. Then 
$$(\chi^\mu\times\chi^\nu)\ind_{\fS_m\times \fS_{n-m}}^{\fS_n}=\sum_{\lambda\in\mathcal{P}(n)}a_\lambda\chi^\lambda,$$
where $a_\lambda$ equals the number of ways to replace the nodes of $[\lambda\smallsetminus\mu]$ by natural numbers such that 
\begin{enumerate}
\item The sequence obtained by reading the natural numbers from right to left, top to bottom is a good sequence of type $\nu$.
\item The numbers are weakly increasing along rows.
\item The numbers are strictly increasing down the columns. 
\end{enumerate}
\end{teo}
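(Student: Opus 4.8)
This statement is the classical Littlewood--Richardson rule, so the plan is the textbook one: push the problem through the Frobenius characteristic map into the ring $\Lambda$ of symmetric functions and then settle the resulting identity among Schur functions. Recall that the characteristic map $\mathrm{ch}$ is a ring isometry from $\bigoplus_{n\geq 0}R(\fS_n)$ --- the direct sum of the character rings, equipped with the induction product --- onto $\Lambda$ with its usual product and the Hall inner product, and that $\mathrm{ch}(\chi^\lambda)=s_\lambda$. Hence $\mathrm{ch}\big((\chi^\mu\times\chi^\nu)\ind_{\fS_m\times\fS_{n-m}}^{\fS_n}\big)=s_\mu s_\nu$, so the coefficient $a_\lambda$ of $\chi^\lambda$ equals $\langle s_\mu s_\nu,s_\lambda\rangle$. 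It therefore suffices to prove that $\langle s_\mu s_\nu,s_\lambda\rangle$ is the number of fillings described in (1)--(3).

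Next I would introduce the skew Schur function $s_{\lambda\smallsetminus\mu}:=\sum_{T}x^{T}$, the sum over the fillings $T$ of $[\lambda\smallsetminus\mu]$ satisfying (2) and (3), where $x^{T}$ records the content of $T$. Separating, in a semistandard filling of $[\lambda]$ over an ordered alphabet $x_1<x_2<\cdots<y_1<y_2<\cdots$, the cells carrying $x$-entries (which form a straight shape $\mu$) from those carrying $y$-entries (which form $\lambda\smallsetminus\mu$) gives the splitting identity $s_\lambda(x,y)=\sum_{\mu}s_\mu(x)\,s_{\lambda\smallsetminus\mu}(y)$. As this exhibits $\mu\mapsto s_{\lambda\smallsetminus\mu}$ as the comultiplication of $s_\lambda$, and comultiplication is adjoint to multiplication for the Hall inner product, we get $\langle s_\mu s_\nu,s_\lambda\rangle=\langle s_\nu,s_{\lambda\smallsetminus\mu}\rangle$. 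Writing $s_{\lambda\smallsetminus\mu}=\sum_{\nu}c^{\lambda}_{\mu\nu}s_\nu$, the task is now to identify $c^{\lambda}_{\mu\nu}$ with the count in (1)--(3).

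The combinatorial heart is the analysis of $c^{\lambda}_{\mu\nu}$ through \emph{jeu de taquin}. Every semistandard filling $T$ of a skew shape has a rectification $\mathrm{rect}(T)$, a semistandard tableau of straight shape; by Sch\"utzenberger's confluence theorem $\mathrm{rect}(T)$ is independent of the sequence of slides used, and rectification changes no entry, only cells, so $x^{T}=x^{\mathrm{rect}(T)}$. Grouping the fillings of $[\lambda\smallsetminus\mu]$ according to their rectification $V$ yields
\[
s_{\lambda\smallsetminus\mu}=\sum_{\nu}\;\sum_{V}\;\big|\{T:\mathrm{rect}(T)=V\}\big|\,x^{V},
\]
the inner sum over semistandard tableaux $V$ of straight shape $\nu$. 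The key point --- which one proves via Knuth (plactic) equivalence of reading words, two skew fillings having the same rectification exactly when their reading words are plactically equivalent --- is that $\big|\{T:\mathrm{rect}(T)=V\}\big|$ depends only on $\nu=\mathrm{sh}(V)$ and not on $V$ itself; calling this common value $c^{\lambda}_{\mu\nu}$ and using $\sum_{V}x^{V}=s_\nu$ recovers $s_{\lambda\smallsetminus\mu}=\sum_\nu c^{\lambda}_{\mu\nu}s_\nu$. Finally, choosing $V$ to be the semistandard tableau of shape $\nu$ whose $i$-th row consists entirely of $i$'s, it remains to check that a semistandard $T$ rectifies to it precisely when its reading word --- taken right to left along rows, top row first --- is a good sequence of type $\nu$ in the sense recalled before the statement (a lattice word of content $\nu$). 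Hence $c^{\lambda}_{\mu\nu}$ is exactly the number of fillings in (1)--(3), as required.

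I expect the only real obstacle to be the jeu-de-taquin input of the third paragraph: that rectification is well defined and that the fibre sizes $\big|\{T:\mathrm{rect}(T)=V\}\big|$ depend only on $\mathrm{sh}(V)$. This is standard but carries essentially all the content of the rule --- everything else is bookkeeping. An alternative avoiding symmetric functions is the module-theoretic route of \cite[Chapter 16]{James}, using Specht modules and semistandard homomorphisms; it is of comparable difficulty, and I would present whichever is deemed more transparent, though the characteristic-map argument above is the most economical to write down.
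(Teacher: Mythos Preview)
Your outline is a correct and standard route to the Littlewood--Richardson rule, but note that the paper does not prove Theorem~\ref{Thm:LR} at all: it is stated in the preliminaries as a known result, with the proof deferred to \cite[Chapter~16]{James}. So there is no ``paper's own proof'' to compare against; the author simply restates the rule for later use.

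That said, your approach via the characteristic map and jeu de taquin (essentially the treatment in Fulton's \emph{Young Tableaux}) is genuinely different from the source cited by the paper. James's Chapter~16 works entirely inside the module category: one realizes $(\chi^\mu\times\chi^\nu)\!\uparrow$ via tensor products of Specht modules and computes multiplicities using the basis of semistandard homomorphisms, never leaving the representation theory of $\fS_n$. Your route externalizes everything to $\Lambda$, where multiplication replaces induction, and then appeals to the combinatorics of rectification. What James's approach buys is self-containment within the paper's declared references and a direct link to Specht modules; what yours buys is conceptual cleanliness and access to the full plactic machinery, at the cost of importing Sch\"utzenberger's confluence theorem and the shape-independence of rectification fibres --- which, as you rightly flag, is where all the difficulty actually sits. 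Either is acceptable here, since the theorem is only being quoted.
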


For $k\in\{0,1,\ldots, n-1\}$ we say that the partition $(n-k,1^k)$ is a \textit{hook partition}. Moreover, we will denote by $\chi_n^k$ the irreducible character of $\fS_n$ labelled by $(n-k,1^k)$.
In the next sections we will be interested in studying irreducible characters of $\fS_n$ labelled by hook partitions. 
In particular, we will need the following special case of the Littlewood-Richardson rule. 

\begin{lemma}\label{lemma:LRhooks}
Let $n$ be a positive natural number and let $k\in\{0,1,\ldots, 2n-1\}$. If $k\leq n-1$, then 
$$(\chi_{2n}^k)\res_{\fS_n\times \fS_n}=\sum_{i=0}^{k-1} (\chi_n^i\times\chi_n^{k-1-i}) + \sum_{i=0}^k (\chi_n^i\times\chi_n^{k-i}).$$
If $k=j+n$ for some $j\in\{0,1,\ldots, n-1\}$, then 
$$(\chi_{2n}^k)\res_{\fS_n\times \fS_n}=\sum_{i=j}^{n-1} (\chi_n^i\times\chi_n^{n+j-i-1}) + \sum_{i=j+1}^{n-1} (\chi_n^i\times\chi_n^{n+j-i}).$$
\end{lemma}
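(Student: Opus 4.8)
The statement is about restricting the hook character $\chi_{2n}^k$ of $\fS_{2n}$ to the Young subgroup $\fS_n \times \fS_n$. Let me think about the cleanest approach.

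Hook characters have a well-known interpretation: $\chi_n^k = \chi^{(n-k,1^k)}$ appears in the exterior powers of the natural permutation module, and in fact $\chi_n^k$ is (up to sign) the character $\bigwedge^k(\text{std})$ where std is the $(n-1)$-dimensional standard representation. The generating-function identity $\sum_{k=0}^{n-1} \chi_n^k t^k = \prod$... actually the cleanest tool is the branching-type formula. But since the paper has just stated the Littlewood–Richardson rule (Theorem \ref{Thm:LR}), the intended proof is surely a direct LR computation, so let me organize that.

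First I would note that restriction to $\fS_n \times \fS_n$ is adjoint to induction, so by Frobenius reciprocity the multiplicity of $\chi_n^i \times \chi_n^j$ in $(\chi_{2n}^k)\res_{\fS_n\times\fS_n}$ equals the multiplicity of $\chi_{2n}^k$ in $(\chi_n^i \times \chi_n^j)\ind_{\fS_n\times\fS_n}^{\fS_{2n}}$. Now I apply Theorem \ref{Thm:LR} with $\mu = (n-i,1^i)$ and $\nu = (n-j,1^j)$: I must count the LR fillings of the skew shape $[\lambda\smallsetminus\mu]$, where $\lambda = (2n-k,1^k)$ is a hook, with content $\nu$. The key combinatorial observation is that the skew shape $[(2n-k,1^k) \smallsetminus (n-i,1^i)]$ is extremely constrained: it consists of the $(n-k+i)$ boxes in the first row beyond column $n-i$ (only if $n-i \le 2n-k$, i.e. $i \ge k-n$... one needs $i\le$ something too for this to be a genuine skew shape requiring $\mu \subseteq \lambda$), together with a vertical strip of $k-i$ boxes in the first column below row $i+1$. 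Since $\nu$ is itself a hook $(n-j,1^j)$, an LR filling must put $1$'s along the horizontal part and $1,2,3,\dots$ descending in a column; the strict-increase-down-columns and weak-increase-along-rows conditions, combined with the lattice (good sequence) condition, pin the filling down almost uniquely once one fixes where the horizontal run of $1$'s versus the vertical run of $2,3,\dots$ sits. Working out exactly which pairs $(i,j)$ admit a valid filling — and checking each admits exactly one — gives precisely the two sums in the statement, with the case split $k \le n-1$ versus $k = j+n$ reflecting whether $\mu=(n-i,1^i)$ can even be a subpartition of $\lambda=(2n-k,1^k)$ and whether the first row of the skew shape is nonempty.

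The main obstacle is purely bookkeeping: carefully determining the range of $i$ for which $\mu \subseteq \lambda$ (equivalently $n-i \le 2n-k$ and $i \le k$), then for each such $i$ determining the possible $j$ and verifying the LR coefficient is exactly $1$. A clean way to avoid a brute LR case-analysis is an alternative argument using the identity $\chi_n^i = \sum_{a} (-1)^{?}\,\big(\text{trivial/sign on }\fS_a\times\fS_{n-a}\big)\ind$, or equivalently the recursion $(\chi_n^k)\res_{\fS_n \times \fS_n}$ obtained by iterating the one-box branching rule $\chi_m^k\res_{\fS_{m-1}} = \chi_{m-1}^{k-1} + \chi_{m-1}^{k}$ (with the convention that $\chi_{m-1}^{-1}=\chi_{m-1}^{m-1}=0$ appropriately) $n$ times; induction on $n$ then collapses the double sum telescopically. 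I expect the induction route to be shorter, but since Theorem \ref{Thm:LR} has just been set up, I would present the direct LR-filling count and relegate the verification that each admissible skew shape has a unique good filling to a short paragraph, emphasizing that both the horizontal part (forced to be all $1$'s) and the vertical part (forced to be $1,2,\dots$ or $2,3,\dots$ depending on whether the corner box is present) leave no freedom.
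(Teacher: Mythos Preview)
Your plan is correct and would work, but it takes a different route from the paper's argument. You propose to compute the full restriction by a complete Littlewood--Richardson bookkeeping (or alternatively an induction via the one-box branching rule), determining for every pair $(i,j)$ whether an LR filling exists and checking uniqueness. The paper instead uses the LR rule only in one direction: it verifies that each term in the claimed sum $\Xi_k$ occurs as a constituent of $(\chi_{2n}^k)\res_{\fS_n\times\fS_n}$, and then finishes with a degree count, using $\chi_n^j(1)=\binom{n-1}{j}$ and the Vandermonde-type identity to see that $\Xi_k(1)=\chi_{2n}^k(1)$, forcing equality. The paper's trick avoids having to argue that no other constituents appear and that all multiplicities are exactly one; your approach gives this directly but at the cost of more case-by-case combinatorics. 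Either is fine, though if you follow your LR route you should say explicitly why no non-hook $\mu$ can occur (immediate, since $\mu$ must be a subpartition of the hook $\lambda$) and why the content $\nu$ of any valid filling is forced to be a hook.
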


\begin{proof}
We sketch the proof for the case $k\in\{0,1,\ldots, n-1\}$.
An easy application of the Littlewood-Richardson rule (Theorem \ref{Thm:LR}), shows that 
$$\Xi_k:=\sum_{i=0}^{k-1} (\chi_n^i\times\chi_n^{k-1-i}) + \sum_{i=0}^k (\chi_n^i\times\chi_n^{k-i})$$
is a constituent of $(\chi_{2n}^k)\res_{\fS_n\times \fS_n}$. 
Since $\chi_n^j(1)=\binom{n-1}{j}$ for all $j\in\{0,1,\ldots, n-1\}$, it follows by direct computation that $\chi_{2n}^k(1)=\Xi_k(1)$. Hence $\Xi_k=(\chi_{2n}^k)\res_{\fS_n\times \fS_n}$. 

A completely similar argument suffices to prove the case where $k=j+n$ for some $j\in\{0,1,\ldots, n-1\}$.
\end{proof}

\subsection{Sylow $2$-subgroups of $\fS_n$}\label{sec:syl}
 Let $P_2$ be the cyclic group $\langle (1,2)\rangle\leq \fS_2$. Let
further $P_1:=\{1\}$ and, for $d\geq 1$, we set 
$$P_{2^{d+1}}:=P_{2^d}\wr P_2:=\{(\sigma_1,\sigma_2;\pi): \sigma_1,\sigma_2\in P_{2^d},\, \pi\in P_2\}\,,$$
so that $P_{2^{d+1}}\cong K_{d+1}$.

We shall always identify $P_{2^d}$ with a Sylow
$2$-subgroup of $\fS_{2^d}$ in the usual way. That is, $(\sigma_1,\sigma_2;\pi)\in P_{2^d}$ is identified
with the element $\overline{(\sigma_1,\sigma_2;\pi)}\in \fS_{2^d}$ that is defined as follows: if $j\in\{1,\ldots,2^d\}$
is such that $j=b+2^{d-1}(a-1)$, for some $a\in\{1,2\}$ and some $b\in\{1,\ldots,2^{d-1}\}$ then
$$\overline{(\sigma_1,\sigma_2;\pi)}(j):=2^{d-1}(\pi(a)-1)+\sigma_{\pi(a)}(b).$$

\medskip

Now let $n\in \mathbb{N}$. Let $k_1>k_2>\cdots >k_t\geq 0$ be integers such that $n=2^{k_1}+\cdots +2^{k_t}$ is the $2$-adic expansion of $n$. 
By \cite[4.1.22, 4.1.24]{JK}, the Sylow $2$-subgroups of $\fS_n$ are
isomorphic to the direct product $\prod_{i=1}^t(P_{2^{k_i}})$. For subsequent computations it will be useful to
fix a particular Sylow $2$-subgroup $P_n$ of $\fS_n$ as follows: for $i\in\{1,\ldots ,t\}$ let $m(i):=\sum_{l=1}^{i-1}2^{k_l}$ and
$$P_{2^{k_i},m(i)}:= (1,1+m(i))\cdots (2^{k_i},2^{k_i}+m(i))\cdot P_{2^{k_i}}\cdot (1,1+m(i))\cdots (2^{k_i},2^{k_i}+m(i))\,.$$ 
Now set 
$$P_n:=P_{2^{k_1},m(1)}\times P_{2^{k_2},m(2)}\times\cdots\times P_{2^{k_t},m(t)}.$$
It is very easy to check that $N_{\fS_n}(P_n)=P_n$.
\section{Canonical bijections}

In this section we will prove Theorem \ref{thmA}. The proof is split into two parts (Theorems \ref{Thm:Alperin} and \ref{thm:bije} below). In particular, in Theorem \ref{Thm:Alperin} we first focus on the second statement of Theorem \ref{thmA}. 
For this reason, we start by restricting our attention to symmetric groups $\fS_{2^n}$, for all $n\in\mathbb{N}$. 
In the following lemma we characterize the odd-degree irreducible representations of $\fS_{2^n}$. 

\begin{lemma}\label{lemma:hook}
Let $\chi\in \mathrm{Irr}(\fS_{2^n})$. Then $\chi\in\mathrm{Irr}_{2'}(\fS_{2^n})$ if and only if $\chi=\chi^\lambda$, for some hook partition $\lambda$.
\end{lemma}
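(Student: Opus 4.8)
The statement is that an irreducible character $\chi^\lambda$ of $\fS_{2^n}$ has odd degree if and only if $\lambda$ is a hook. The plan is to use the hook length formula together with a $2$-adic analysis, which is the classical route to this kind of statement. Recall that $\chi^\lambda(1) = (2^n)!/\prod_{(i,j)\in[\lambda]} h_{ij}$, where $h_{ij}$ runs over the hook lengths of $[\lambda]$. So $\chi^\lambda$ has odd degree precisely when $\nu_2\bigl(\prod_{(i,j)} h_{ij}\bigr) = \nu_2\bigl((2^n)!\bigr)$, where $\nu_2$ denotes the $2$-adic valuation. By Legendre's formula, $\nu_2\bigl((2^n)!\bigr) = 2^n - 1$, so the condition is $\sum_{(i,j)\in[\lambda]} \nu_2(h_{ij}) = 2^n-1$.

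\textbf{The two directions.} For the ``if'' direction, I would simply check directly that when $\lambda = (2^n - k, 1^k)$ is a hook, the multiset of hook lengths of $[\lambda]$ is $\{1,2,\ldots,\widehat{k+1},\ldots,\widehat{2^n-k}, \ldots\}$ — more precisely, it consists of the ``arm+leg+1'' value $2^n$ at the corner together with $\{1,\dots,2^n-k-1\}$ along the first row and $\{1,\dots,k\}$ down the first column. Then $\prod h_{ij} = 2^n \cdot (2^n-k-1)! \cdot k!$, and a short Legendre/Kummer computation shows $\nu_2$ of this equals $n + (2^n-k-1-s_2(2^n-k-1)) + (k - s_2(k))$; using $s_2(2^n - k - 1) + s_2(k) = s_2(2^n - 1) = n$ (valid precisely because $k + (2^n-k-1) = 2^n - 1$ has no carries in base $2$), this collapses to $2^n - 1$, giving odd degree. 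For the ``only if'' direction, this is where the real work lies: I must show that a non-hook $\lambda \vdash 2^n$ always has $\chi^\lambda(1)$ even.

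\textbf{Main obstacle and how to handle it.} The hard part is the converse: ruling out even degrees for all non-hooks. The cleanest tool here is the characterization via the $2$-core tower / Macdonald's theorem, or equivalently the observation (Macdonald) that $\chi^\lambda(1)$ is odd iff the binary digits in the ``$2$-quotient tower'' of $\lambda$ match the binary expansion of $n$ in the appropriate sense. For $n = 2^m$ a prime power, this forces the $2$-core tower of $\lambda$ to have a single $1$ at level $m$ and zeros elsewhere, and one must identify exactly which partitions of $2^m$ have this property. Alternatively, and perhaps more in the spirit of the paper (which works with restrictions to Young subgroups $\fS_{2^{n-1}} \times \fS_{2^{n-1}}$), I would argue by induction on $n$: if $\chi^\lambda$ has odd degree, then $\chi^\lambda\!\res_{\fS_{2^{n-1}}\times\fS_{2^{n-1}}}$ has an odd-degree constituent, so by the branching/Littlewood--Richardson rule and induction there exist hooks $\mu, \nu \vdash 2^{n-1}$ with $c^\lambda_{\mu\nu}$ odd; then I would show — e.g.\ using the Littlewood--Richardson rule as restated in Theorem~\ref{Thm:LR}, or directly the combinatorics of adding a hook-shaped skew shape to a hook — that the only $\lambda$ arising this way with $\chi^\lambda(1)$ odd are themselves hooks (a non-hook $\lambda$ would force $c^\lambda_{\mu\nu}$ to be even by a pairing/involution argument on the LR fillings, or simply have too large a $2$-adic valuation of $\prod h_{ij}$). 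The base case $n=1$ is trivial since both partitions of $2$ are hooks. I expect the induction step — pinning down precisely which $\lambda$ with an odd LR coefficient $c^\lambda_{\mu\nu}$ ($\mu,\nu$ hooks) are non-hooks, and showing all of those have even degree — to be the technical crux, and I would lean on Lemma~\ref{lemma:LRhooks} to make the hook-on-hook computation completely explicit.
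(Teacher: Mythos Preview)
Your ``if'' direction is correct and essentially the same computation the paper does (the paper just writes $\chi^\lambda(1)=\binom{2^n-1}{k}$ and observes it is odd, which is your Legendre/Kummer argument in compressed form).

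For the ``only if'' direction, however, you are working much harder than necessary. The paper does not attempt to show directly that non-hooks have even degree. Instead it uses a counting argument: there are exactly $2^n$ hook partitions of $2^n$, each gives an odd-degree character by the computation above, and by Macdonald's result \cite[Corollary~1.3]{Mac} one knows $|\mathrm{Irr}_{2'}(\fS_{2^n})|=2^n$. Hence the hooks exhaust the odd-degree characters and the converse follows for free. Your option~(a), invoking the $2$-core tower characterization, would certainly work but imports more machinery than this two-line pigeonhole.

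Your option~(b), the inductive LR approach, has a genuine gap as stated. Knowing that $c^\lambda_{\mu\nu}$ is odd for some hooks $\mu,\nu\vdash 2^{n-1}$ does \emph{not} by itself force $\lambda$ to be a hook: for instance with $n=2$, $\mu=\nu=(2)$, one has $c^{(2,2)}_{(2),(2)}=1$, yet $(2,2)$ is not a hook. So you would still need to show directly that every such non-hook $\lambda$ has even degree, which is exactly the statement you are trying to prove. Lemma~\ref{lemma:LRhooks} does not help here, since it computes the restriction of a hook character (telling you which $(\mu,\nu)$ appear when $\lambda$ is already a hook), not which $\lambda$ can appear in the induction of $\chi^\mu\times\chi^\nu$ when $\lambda$ is arbitrary. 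You could rescue~(b) with a parity/involution argument on LR fillings, but that is a nontrivial extra lemma, whereas the counting argument avoids the issue entirely.
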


\begin{proof}
There are many ways to prove this. It is well known that $|\mathrm{Irr}_{2'}(\fS_{2^n})|=2^n$ (a proof can be found in \cite[Corollary 1.3]{Mac}). Moreover if $\lambda=(2^n-k,1^k)$ then $2$ does not divide $\chi^\lambda(1)$. This follows from the hook Formula for dimensions (see \cite[Page 77]{James}), since $$\chi^\lambda(1)=\frac{(2^n)!}{2^n\cdot k!(2^n-1-k)!}=\frac{(2^n-1)(2^n-2)\cdots (2^n-k)}{k!},$$
and this is clearly an odd integer.
\end{proof}

For the reader convenience, we modify the notation previously introduced in section \ref{section:Sn}. For $n\in\mathbb{N}$ and $k\in\{0,1,\ldots, 2^n-1\}$, we will now denote by 
$\chi_n^k$ the irreducible character of $\fS_{2^n}$ labelled by the partition $(2^n-k,1^k)$.

\begin{teo}\label{Thm:Alperin}
Let $\chi\in \mathrm{Irr}_{2'}(\fS_{2^n})$. 
Then $\chi\res_{P_{2^n}}$ has a unique linear constituent $\Phi_{n}(\chi)$.

Moreover, the map $$\Phi_{n}: \mathrm{Irr}_{2'}(\fS_{2^n})\longrightarrow \mathrm{Irr}_{2'}(P_{2^n}),$$ defined by $\chi\mapsto \Phi_{n}(\chi)$ is a bijection. 
\end{teo}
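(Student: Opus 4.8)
The plan is to prove the statement by induction on $n$, exploiting the wreath product structure $P_{2^n} = P_{2^{n-1}} \wr P_2 \cong K_n$ together with the explicit branching rule for hook characters recorded in Lemma~\ref{lemma:LRhooks}. By Lemma~\ref{lemma:hook}, $\mathrm{Irr}_{2'}(\fS_{2^n}) = \{\chi_n^k : 0 \leq k \leq 2^n-1\}$, so we must track what each $\chi_n^k$ does on restriction. The base case $n=1$ is immediate: $\fS_2 = P_2$ has exactly two irreducible characters, both linear, and $\Phi_1$ is the identity. For the inductive step, first restrict $\chi_n^k$ from $\fS_{2^n}$ to $\fS_{2^{n-1}} \times \fS_{2^{n-1}}$ using Lemma~\ref{lemma:LRhooks}: the constituents are sums of $\chi_{n-1}^i \times \chi_{n-1}^{j}$ for suitable pairs $(i,j)$, each with multiplicity one. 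Then restrict further to the base group $B_n = P_{2^{n-1}} \times P_{2^{n-1}}$ and apply the inductive hypothesis to each tensor factor; finally, restrict from $B_n$ to $K_n$, where the key tool is Theorem~\ref{zoo} / Lemma~\ref{lemma:baserestriction} describing which $K_n$-characters are linear and how linear characters of $B_n$ assemble into characters of $K_n$.

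The crux is the following bookkeeping. A linear character of $K_n = P_{2^{n-1}} \wr P_2$ restricts to $B_n$ as $\mu \times \mu$ for a linear $\mu \in \mathrm{Irr}(P_{2^{n-1}})$ invariant under the swap, and each such $\mu$ gives rise to exactly two linear characters of $K_n$ (distinguished by the two linear characters of $P_2$ they inflate together with). Since $\Res^{\fS_{2^n}}_{B_n} \chi_n^k = \sum_{(i,j)} \chi_{n-1}^i \boxtimes \chi_{n-1}^{j}$ with all multiplicities one, and by induction each $\chi_{n-1}^i$ has a unique linear constituent $\Phi_{n-1}(\chi_{n-1}^i)$ on restriction to $P_{2^{n-1}}$, the linear constituents of $\Res_{B_n}\chi_n^k$ are exactly the $\Phi_{n-1}(\chi_{n-1}^i) \times \Phi_{n-1}(\chi_{n-1}^{j})$ for the pairs $(i,j)$ occurring. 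For such a product to extend to a linear character of $K_n$ it must be swap-invariant, i.e.\ we need $\Phi_{n-1}(\chi_{n-1}^i) = \Phi_{n-1}(\chi_{n-1}^{j})$, and since $\Phi_{n-1}$ is a bijection this forces $i = j$. One checks from the two formulas in Lemma~\ref{lemma:LRhooks} that among all pairs $(i,j)$ contributing to $\Res \chi_n^k$ there is exactly one diagonal pair $(i,i)$: when $k = 2m \leq 2^n-2$ is even it is $(m,m)$ coming from the second sum, and when $k = 2m+1$ is odd it is again $(m,m)$ but coming from the first sum in the relevant case; the arithmetic of the index ranges must be verified to see the diagonal term appears once and only once, splitting into the sub-cases $k \leq 2^{n-1}-1$ and $k \geq 2^{n-1}$. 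This unique diagonal pair yields a unique swap-invariant linear constituent $\mu \times \mu$ of $\Res_{B_n}\chi_n^k$ with multiplicity one, where $\mu = \Phi_{n-1}(\chi_{n-1}^{\lfloor k/2 \rfloor})$.

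It remains to pass from $B_n$ to $K_n$. The constituent $\mu \times \mu$ extends to exactly two linear characters of $K_n$; a parity/sign argument — comparing $\langle \Res_{K_n}\chi_n^k, \widehat{\psi} \cdot (\widetilde{\mu \times \mu})\rangle$ for the two characters $\psi$ of $P_2$, for instance via Frobenius reciprocity and the value of $\chi_n^k$ on a suitable odd permutation, or by using that $\chi_n^{k}$ and its conjugate $\chi_n^{2^n-1-k}$ are exchanged by tensoring with the sign character — pins down which of the two occurs, and shows it occurs with multiplicity one. This establishes that $\chi_n^k \res_{P_{2^n}}$ has a unique linear constituent $\Phi_n(\chi_n^k)$. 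For the bijectivity of $\Phi_n$: both sides have cardinality $2^n$ (the left by Lemma~\ref{lemma:hook}, the right since $P_{2^n}/[P_{2^n},P_{2^n}]$ has order $2^n$, as $K_n^{\mathrm{ab}} \cong C_2^n$ by an easy induction), so it suffices to prove injectivity. If $\Phi_n(\chi_n^k) = \Phi_n(\chi_n^{k'})$, then restricting to $B_n$ the swap-invariant linear constituents agree, whence $\mu = \mu'$ and so $\lfloor k/2\rfloor = \lfloor k'/2\rfloor$ by injectivity of $\Phi_{n-1}$; the extra bit of data recorded by the choice between the two extensions to $K_n$ (the $P_2$-inflation factor) then distinguishes $k$ even from $k$ odd, giving $k = k'$. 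The main obstacle is the combinatorial case analysis in the second paragraph — verifying across the sub-cases of Lemma~\ref{lemma:LRhooks} that precisely one diagonal pair $(i,i)$ appears and with multiplicity one — and, intertwined with it, correctly identifying which of the two extensions to $K_n$ is the genuine constituent so that $\Phi_n$ is coherently defined and injective.
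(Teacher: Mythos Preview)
Your proposal is correct and matches the paper's proof almost exactly: induction on $n$, Lemma~\ref{lemma:LRhooks} to reach $\fS_{2^{n-1}}\times\fS_{2^{n-1}}$, the inductive hypothesis to reach $B_n$, the observation that precisely one diagonal pair $(\lfloor k/2\rfloor,\lfloor k/2\rfloor)$ occurs with multiplicity one, and Lemma~\ref{lemma:baserestriction} to conclude uniqueness on $P_{2^n}$. Where you leave the injectivity step as an unspecified ``parity/sign argument'', the paper supplies a clean device worth noting: take a $2^n$-cycle $g\in P_{2^n}$; every even-degree irreducible character of $P_{2^n}$ vanishes at $g$, so $\Phi_n(\chi_n^k)(g)=\chi_n^k(g)$, and the Murnaghan--Nakayama rule gives $\chi_n^{2j}(g)=1$ while $\chi_n^{2j+1}(g)=-1$.
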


\begin{proof}
We proceed by induction on $n$. For $n=1$ we have that $\fS_2=P_2$, and the statement holds trivially. 
Let $n\geq 2$ and assume that the statement holds for $n-1$. 
Let $\chi\in\mathrm{Irr}_{2'}(\fS_{2^n})$. By Lemma \ref{lemma:hook}, there exists $k\in \{0,1,\ldots, 2^n-1\}$ such that $\chi=\chi_n^k$.
%If $k=j+2^{n-1}$ for some $j\in\{0,1,\ldots, 2^{n-1}-1\}$, then 
%$\chi_n^k=(\chi_n^{j})\cdot \mathrm{sgn}(\fS_{2^n})$. Hence it is enough to restrict our attention to $k\in\{0,1,\ldots, 2^{n-1}-1\}$.
From Section \ref{sec:syl},
it is easy to see that $P_{2^n}=B_n\rtimes \left\langle g\right\rangle$, where $$g=\prod_{i=1}^{2^{n-1}}(i, i+2^{n-1}),$$
and $B_n=P_{2^{n-1}}\times P_{2^{n-1}}^g\cong P_{2^{n-1}}\times P_{2^{n-1}}$. Let $T_n$ be the maximal Young subgroup of $\fS_{2^n}$ containing $B_n$. Clearly $T_n=\fS_{2^{n-1}}\times \fS_{2^{n-1}}$. 

By Lemma \ref{lemma:LRhooks} we have that 
$\chi_n^k\res_{T_n}$ equals

\begin{equation*}
\begin{cases}
\sum_{j=0}^{k-1} (\chi_{n-1}^j\times\chi_{n-1}^{k-1-j}) + \sum_{j=0}^k (\chi_{n-1}^j\times\chi_{n-1}^{k-j}) &\text{ if } 0\leq k\leq 2^{n-1}-1\,,\\

\\

\sum_{i=k-2^{n-1}}^{2^{n-1}-1} (\chi_{n-1}^i\times\chi_{n-1}^{k-1-i}) + \sum_{i=k-2^{n-1}+1}^{2^{n-1}-1} (\chi_{n-1}^i\times\chi_{n-1}^{k-i}) &\text{ if } 2^{n-1}\leq k\leq 2^{n}-1\,.\\
\end{cases}
\end{equation*}

\noindent Therefore, using the inductive hypothesis we deduce that $\chi_n^k\res_{B_n}$ is equal to

\begin{equation*}
\begin{cases}
\sum_{j=0}^{k-1} (\phi_{n-1}^j\times\phi_{n-1}^{k-1-j}) + \sum_{j=0}^k (\phi_{n-1}^j\times\phi_{n-1}^{k-j})+\Delta_k &\text{ if } 0\leq k\leq 2^{n-1}-1\,,\\

\\

\sum_{i=k-2^{n-1}}^{2^{n-1}-1} (\phi_{n-1}^i\times\phi_{n-1}^{k-1-i}) + \sum_{i=k-2^{n-1}+1}^{2^{n-1}-1} (\phi_{n-1}^i\times\phi_{n-1}^{k-i})+\Delta_k &\text{ if } 2^{n-1}\leq k\leq 2^{n}-1\,,\\
\end{cases}
\end{equation*}

%\begin{eqnarray*}
%(\chi_n^k\res_{P_n})\res_{B_n} &=&  (\chi_n^k\res_T)\res_{B_n} \\
%&=& \big(\sum_{j=0}^k (\chi_{n-1}^j\times\chi_{n-1}^{k-1-j}) + \sum_{j=0}^k (\chi_{n-1}^j\times\chi_{n-1}^{k-j})\big)\res_{B_n}\\
%&=& \big(\sum_{j=0}^k (\lambda_{n-1}^j\times\lambda_{n-1}^{k-1-j}) + \sum_{j=0}^k (\lambda_{n-1}^j\times\lambda_{n-1}^{k-j})\big) + \Delta, 
%\end{eqnarray*}

\noindent where $\phi_{n-1}^j=\Phi_{n-1}(\chi_{n-1}^j)$ for all $j\in \{0,1,\ldots, 2^{n-1}-1\}$ and $\Delta_k$ is a (possibly empty) sum of irreducible characters of $P_{2^{n-1}}\times P_{2^{n-1}}$ of even degree, for all $k\in\{0,\dots 2^n-1\}$. 

For $k\in \{0,1,\ldots, 2^n-1\}$, let $\gamma(k)$ be $\frac{k}{2}$ (respectively $\frac{k-1}{2}$) if $k$ is even (respectively odd).  
Then 

$$(\chi_n^k\res_{P_{2^n}})\res_{B_n}=(\chi_n^k\res_{T_n})\res_{B_n}= (\phi_{n-1}^{\gamma(k)}\times \phi_{n-1}^{\gamma(k)}) + \Gamma_k + \Delta_k, $$
where $\Gamma_k$ is a sum of linear characters of $B_n$ of the form
$\alpha\times \beta$, where $\alpha$ and $\beta$ are non-isomorphic linear characters of $\fS_{2^{n-1}}$. 
By Lemma \ref{lemma:baserestriction}, we deduce that $\chi^k_n\res_{P_{2^n}}$ has a unique linear constituent $\Phi_n(\chi^k_n)$. Moreover, 
$$\Phi_n(\chi^k_n)\res_{B_n}=\phi_{n-1}^{\gamma(k)}\times \phi_{n-1}^{\gamma(k)}.$$ 

To conclude the proof, we just need to show that whenever $k_1<k_2$ are such that $\gamma(k_1)=\gamma(k_2)$, then $\Phi_n(\chi^{k_1}_n)\neq \Phi_n(\chi^{k_2}_n)$. 
Clearly $\gamma(k_1)=\gamma(k_2)$ if and only if $k_1=2j$ and $k_2=2j+1$, for some $j\in\{0,1,\ldots, 2^{n-1}-1\}$.
Let $g\in P_{2^n}$ be a $2^n$-cycle.
By the Murnaghan-Nakayama rule (see \cite[Page 79]{James}), we have that 
$$\chi_n^{k_1}(g)=1\ \ \text{and}\ \ \chi_n^{k_2}(g)=-1.$$
Moreover, it is very easy to show that $\theta(g)=0$ for all $\theta\in \mathrm{Irr}(P_{2^n})$ such that $\theta(1)$ is even. %(If $\theta$ is induced from $B_n$ then $\theta(g)=0$, since $g\notin B_n$. If $\theta$ is not induced from $B_n$ then $\theta(g)=0$ by \cite[Definition 2.1]{GLIsaacs}).
\color{black}
Hence $$\Phi_n(\chi^{k_1}_n)(g)=\chi_n^{k_1}(g)=1\neq -1=\chi_n^{k_2}(g)=\Phi_n(\chi^{k_2}_n)(g).$$
The proof is concluded. 
\end{proof}

Building on the bijection described in Theorem \ref{Thm:Alperin},  we are now ready to define a canonical bijection between ${\rm Irr}_{2'}(\fS_n)$ and ${\rm Irr}_{2'}(P_n)$, for all $n\in\mathbb{N}$. This will complete the proof of Theorem \ref{thmA}.

\begin{teo}\label{thm:bije}
Let $n\in\mathbb{N}$. There exists a natural bijection 
$$\Psi_n: \mathrm{Irr}_{2'}(\fS_{n})\longrightarrow \mathrm{Irr}_{2'}(P_n).$$
If $n=2^k$ for some $k\in\mathbb{N}$, then $\Psi_n=\Phi_{k}$. 
\end{teo}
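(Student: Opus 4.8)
The aim is to construct, for an arbitrary $n$ with $2$-adic expansion $n=2^{k_1}+\cdots+2^{k_t}$, a natural bijection $\Psi_n\colon\mathrm{Irr}_{2'}(\fS_n)\to\mathrm{Irr}_{2'}(P_n)$ that reduces to $\Phi_k$ when $n=2^k$. The natural strategy is to exploit the product structure $P_n=P_{2^{k_1},m(1)}\times\cdots\times P_{2^{k_t},m(t)}$ together with the corresponding Young subgroup $Y_n:=\fS_{2^{k_1}}\times\cdots\times\fS_{2^{k_t}}\le\fS_n$, of which $P_n$ is a Sylow $2$-subgroup. So the first step is to establish the combinatorial description of $\mathrm{Irr}_{2'}(\fS_n)$: by a classical result (Macdonald, and the $2$-adic arithmetic of the hook-length formula) an irreducible character $\chi^\lambda$ of $\fS_n$ has odd degree if and only if the restriction $(\chi^\lambda)\!\res_{Y_n}$ has a (necessarily unique, up to the ordering) odd-degree constituent, and that constituent has the form $\chi_1\times\cdots\times\chi_t$ with each $\chi_i\in\mathrm{Irr}_{2'}(\fS_{2^{k_i}})$; moreover this sets up a bijection $\mathrm{Irr}_{2'}(\fS_n)\leftrightarrow\mathrm{Irr}_{2'}(\fS_{2^{k_1}})\times\cdots\times\mathrm{Irr}_{2'}(\fS_{2^{k_t}})$. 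This is exactly the statement that the odd-degree irreducibles of $\fS_n$ are obtained by inducing odd-degree irreducibles from the "largest" Young subgroup built from the $2$-adic blocks, with multiplicity one; it can be cited from \cite{Mac} or proved directly from the Littlewood–Richardson rule and an induction on $t$.

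**The construction.** Given $\chi\in\mathrm{Irr}_{2'}(\fS_n)$, let $\chi^{(1)}\times\cdots\times\chi^{(t)}$ be the unique odd-degree constituent of $\chi\!\res_{Y_n}$ with $\chi^{(i)}\in\mathrm{Irr}_{2'}(\fS_{2^{k_i}})$. Apply $\Phi_{k_i}$ of Theorem~\ref{Thm:Alperin} to each factor to obtain linear characters $\Phi_{k_i}(\chi^{(i)})\in\mathrm{Irr}(P_{2^{k_i}})$, and (transporting along the conjugating elements used to define $P_{2^{k_i},m(i)}$) set
$$\Psi_n(\chi):=\Phi_{k_1}(\chi^{(1)})\times\Phi_{k_2}(\chi^{(2)})\times\cdots\times\Phi_{k_t}(\chi^{(t)})\in\mathrm{Irr}(P_n).$$
Since $P_n$ is a direct product, every linear character of $P_n$ is an (external) product of linear characters of the factors $P_{2^{k_i}}$, so $\mathrm{Irr}_{2'}(P_n)=\mathrm{Irr}(P_n/P_n')\cong\prod_i\mathrm{Irr}_{2'}(P_{2^{k_i}})$; the map $\Psi_n$ is then the composite of three bijections — the decomposition $\mathrm{Irr}_{2'}(\fS_n)\cong\prod_i\mathrm{Irr}_{2'}(\fS_{2^{k_i}})$, the product $\prod_i\Phi_{k_i}$, and the identification $\prod_i\mathrm{Irr}_{2'}(P_{2^{k_i}})\cong\mathrm{Irr}_{2'}(P_n)$ — hence a bijection. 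When $t=1$, i.e.\ $n=2^k$, all three maps are the obvious ones and $\Psi_n=\Phi_k$ by construction, giving the last assertion.

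**Naturality and the main obstacle.** What needs care is the word \emph{natural}: one must check that $\Psi_n$ does not depend on any arbitrary choice. The only choices entering are the labelling of the $2$-adic blocks and the conjugating permutations $(1,1+m(i))\cdots$ used to place $P_{2^{k_i}}$ inside $\fS_n$; since the $k_i$ are pairwise distinct, the blocks are canonically ordered by size, and conjugation by those fixed elements is a canonical isomorphism, so no genuine choice is made. The substantive point — and the step I expect to be the real work — is verifying that the decomposition $\chi\!\res_{Y_n}$ really does have a \emph{unique} odd-degree constituent and that passing to it is the "right" canonical operation, i.e.\ that it is equivariant under the relevant structure and agrees with the restriction-to-$P_{2^k}$ picture of Theorem~\ref{Thm:Alperin} in the prime-power case. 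Concretely, this amounts to an inductive argument peeling off one $2$-adic part at a time (say the smallest), using Lemma~\ref{lemma:LRhooks}-type branching together with the fact that $2\nmid\binom{n}{2^{k_t}}$ when $2^{k_t}$ is the lowest set bit of $n$, to show that exactly one constituent of $\chi\!\res_{\fS_{n-2^{k_t}}\times\fS_{2^{k_t}}}$ has odd degree on both factors; iterating gives the claimed bijection $\mathrm{Irr}_{2'}(\fS_n)\cong\prod_i\mathrm{Irr}_{2'}(\fS_{2^{k_i}})$ and pins down $\Psi_n$ uniquely. Everything else is bookkeeping about direct products of groups and their linear characters.
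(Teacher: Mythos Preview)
Your proposal rests on a claim that is false: it is \emph{not} true in general that an odd-degree irreducible character of $\fS_n$ has a unique odd-degree constituent upon restriction to the Young subgroup $Y_n=\fS_{2^{k_1}}\times\cdots\times\fS_{2^{k_t}}$. Take $n=6$ and $\lambda=(5,1)$, so $\chi^{(5,1)}(1)=5$. A direct Littlewood--Richardson computation gives
\[
\chi^{(5,1)}\res_{\fS_4\times\fS_2}=\chi^{(4)}\times\chi^{(2)}+\chi^{(4)}\times\chi^{(1,1)}+\chi^{(3,1)}\times\chi^{(2)},
\]
and all three constituents have odd degree. (The same phenomenon occurs for $\chi^{(4,2)}$: three of its four constituents on $\fS_4\times\fS_2$ have odd degree.) Your proposed map $\chi\mapsto(\chi^{(1)},\ldots,\chi^{(t)})$ is therefore not well defined, and the inductive sketch you outline cannot be repaired: the appeal to ``Lemma~\ref{lemma:LRhooks}-type branching'' is already suspect, since for $t\ge 2$ the odd-degree characters of $\fS_n$ are \emph{not} labelled by hook partitions, so that lemma does not apply.

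The paper sidesteps this difficulty entirely. Its construction is purely combinatorial: one puts the dictionary order on the set $K\subseteq\mathcal P(n)$ of partitions labelling odd-degree characters, and the induced lexicographic order on the set $H\subseteq\mathcal P(2^{k_1})\times\cdots\times\mathcal P(2^{k_t})$ of tuples of hook partitions (which, via the maps $\Phi_{k_i}$ of Theorem~\ref{Thm:Alperin}, label $\mathrm{Irr}_{2'}(P_n)$). Since $|K|=|H|$ by Olsson, matching the $i$th element of each list gives a choice-free bijection $\Psi_n$, and for $n=2^k$ the ordering on hooks makes this coincide with $\Phi_k$. The paper explicitly notes, in the discussion following the example for $n=6$, that $\Psi_n(\chi)$ need not be a constituent of $\chi\res_{P_n}$ (e.g.\ $\Psi_6(\chi^{(3,3)})=\psi^{((3,1),(1^2))}$ does not occur in $\chi^{(3,3)}\res_{P_6}$), which already signals that no restriction-based definition of $\Psi_n$ of the kind you propose is available in general.
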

\begin{proof}
Let $K$ be the subset of $\mathcal{P}(n)$ defined by 
$K=\{\lambda\in\mathcal{P}(n)\ :\ \chi^\lambda(1)\ \ \text{is odd}\}.$
The dictionary order, defined in Section \ref{section:Sn}, restricts to a total order on $K$. 
%\noindent 
Let $K=\{\lambda_1,\lambda_2,\ldots ,\lambda_{|K|}\}$ be such that $\lambda_i>\lambda_{i+1}$ for all $i\in\{1,\ldots,|K|-1\}$.

Let $n=2^{k_1}+2^{k_2}+\cdots +2^{k_t}$ be the $2$-adic expansion of $n$. From Section \ref{sec:syl}, we have that $P_{n}\cong P_{2^{k_1}}\times P_{2^{k_2}}\times\cdots\times P_{2^{k_t}}.$ 
If $\psi$ is a linear character of $P_n$ then $\psi=\psi_1\times\cdots\times \psi_t$, where $\psi_j$ is a linear character of $P_{2^{k_j}}$ for all $j\in\{1,\ldots, t\}$. 
By Theorem \ref{Thm:Alperin} we have that for all $j\in\{1,\ldots, t\}$, there exists a unique hook partition $\alpha(j)\in\mathcal{P}(2^{k_j})$, such that  $\psi_j=\Phi_{k_j}(\chi^{\alpha(j)})$.
In this case we denote $\psi$ by $\psi^{\underline{\alpha}}$, where 
$$\underline{\alpha}=(\alpha(1),\ldots, \alpha(t))\in \mathcal{P}(2^{k_1})\times\cdots\times \mathcal{P}(2^{k_t}).$$
Let $H$ be the subset of $\mathcal{P}(2^{k_1})\times\cdots\times \mathcal{P}(2^{k_t})$ consisting of all the elements $(\alpha(1),\ldots, \alpha(t))$, such that $\alpha(j)\in\mathcal{P}(2^{k_j})$ is a hook partition. 
The total order defined in Section \ref{section:Sn} restricts to a total order on $H$. Let $$H=\{\underline{\eta_1},\underline{\eta_2},\ldots ,\underline{\eta_{|H|}}\},$$ be such that $\underline{\eta_i}>\underline{\eta_{i+1}}$ for all $i\in\{1,\ldots,|H|-1\}$.
It is now clear that $$\mathrm{Irr}_{2'}(P_n)=\{\psi^{\underline{\eta_j}}\ :\ j\in\{1,\ldots,|H|\}\}.$$
By \cite[Proposition 4.4]{Olsson} we have that $|K|=|H|$. Hence the map 
$$\Psi_n: \mathrm{Irr}_{2'}(\fS_{n})\longrightarrow \mathrm{Irr}_{2'}(P_n),$$
defined by $\Psi_n(\chi^{\lambda_i})=\psi^{\underline{\eta_i}}$ for all $i\in\{1,\ldots, |K|\}$ is a choice-free canonical bijection.  
It is clear by construction that $\Psi_{2^k}=\Phi_k$, for all $k\in\mathbb{N}$. 
\end{proof}

In the following example we concretely construct the bijection $\Psi_6$. 

\begin{example}
Let $n=6$ and let $P_6$ be a Sylow $2$-subgroup of $\fS_6$. Then $P_6\cong P_4\times P_2\cong C_2\wr C_2\times C_2$. 
Let 
\begin{eqnarray*}
\mathcal{K}_2 &=& \{(2), (1^2)\}\subseteq \mathcal{P}(2),\\
\mathcal{K}_4 &=& \{(4), (3,1), (2,1^2), (1^4)\}\subseteq \mathcal{P}(4),\ \text{and}\\
\mathcal{K}_6 &=& \{(6), (5,1), (4,2), (3^2), (2^3), (2^2,1^2), (2,1^4), (1^6)\}\subseteq \mathcal{P}(6).
\end{eqnarray*}
Then $\mathrm{Irr}_{2'}(\fS_n)=\{\chi^\lambda\ :\ \lambda\in\mathcal{K}_n\}$ for all $n\in\{2,4,6\}$. Notice that all the partitions listed in the above sets are written from left to right according to the dictionary order. Let $\mathcal{H}$ be the subset of $\mathcal{P}(4)\times\mathcal{P}(2)$ defined by 
$$\mathcal{H}=\{((4),(2)), ((4),(1^2)), ((3,1),(2)), \ldots, ((1^4),(2)), ((1^4),(1^2))\}.$$
For $\underline{\alpha}=(\alpha(1),\alpha(2))\in\mathcal{H}$ let 
$\psi^{\underline{\alpha}}$ be the linear character of $P_6$ defined by 
$$\psi^{\underline{\alpha}}=
\Phi_2(\chi^{\alpha(1)})\times\Phi_1(\chi^{\alpha(2)}).$$
Then $\mathrm{Irr}_{2'}(P_6)=\{\psi^{\underline{\alpha}}\ :\ \underline{\alpha}\in\mathcal{H}\}$. 

The bijection $\Psi_6$ is constructed as follows: 
\begin{eqnarray*}
\chi^{(6)} & \longmapsto & \psi^{((4),(2))},\\
\chi^{(5,1)} & \longmapsto & \psi^{((4),(1^2))},\\
\chi^{(4,2)} & \longmapsto & \psi^{((3,1),(2))},\\
\chi^{(3^2)} & \longmapsto & \psi^{((3,1),(1^2))},\\
\chi^{(2^3)} & \longmapsto & \psi^{((2,1^2),(2))},\\
\chi^{(2^2,1^2)} & \longmapsto & \psi^{((2,1^2),(1^2))},\\
\chi^{(2,1^4)} & \longmapsto & \psi^{((1^4),(2))},\\
\chi^{(1^6)} & \longmapsto & \psi^{((1^4),(1^2))}.\\
\end{eqnarray*}
\end{example}

Let $n=2^k$ and let $\chi^\lambda\in \mathrm{Irr}_{2'}(\fS_{2^k})$. By Theorem \ref{Thm:Alperin} we have that $\Psi_{2^k}(\chi^\lambda)=\Phi_k(\chi^\lambda)$ is a constituent of the restriction of the character $\chi^\lambda$ to a Sylow $2$-subgroup of $\fS_{2^k}$.  
In general the bijection $\Psi_n$ does not \textit{agree} with restriction.
For example, direct computations show that when $n=6$ we have that $$\chi^{(3,3)}\res_{P_6}=\psi^{((3,1),(2))}+\psi^{((4),(2))}+\psi^{((1^4),(1^2))}+\Delta,$$
where $\Delta$ is an irreducible character of $P_6$, such that $\Delta(1)=2$. Hence $\Psi_6(\chi^{(3,3)})=\psi^{((3,1),(1^2))}$ is not a constituent of $\chi^{(3,3)}\res_{P_6}$. 

Anyway, in the following proposition we show that the bijection $\Psi_n$ is induced by restriction when $n=2^m+1$. 

\begin{proposition}\label{2^n+1}
Let $n=2^m+1$ and let $\lambda\in\mathcal{P}(n)$ be such that  $\chi^\lambda(1)$ is odd. Then $\Psi_n(\chi^\lambda)$ is a constituent of $\chi^\lambda\res_{P_n}$. 
\end{proposition}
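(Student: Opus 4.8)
The plan is to reduce the statement about $\fS_{2^m+1}$ to the already-established Theorem~\ref{Thm:Alperin} about $\fS_{2^m}$, using the fact that $P_n = P_{2^m} \times P_1 = P_{2^m} \times \{1\}$ sits naturally inside the Young subgroup $\fS_{2^m}\times\fS_1 = \fS_{2^m}$ of $\fS_n$. So the first step is to observe that $\chi^\lambda\res_{P_n}$ is obtained by first restricting $\chi^\lambda$ to $\fS_{2^m}\times\fS_1\cong\fS_{2^m}$ and then further restricting to $P_{2^m}$. The branching rule for removing a single box tells us that $\chi^\lambda\res_{\fS_{2^m}} = \sum_{\mu}\chi^\mu$, the sum over all partitions $\mu$ of $2^m$ obtained from $\lambda$ by deleting one removable node.

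The second step is to determine which $\mu$ in that branching sum have odd degree. Write $n = 2^m+1$, whose $2$-adic expansion is $2^m + 2^0$. Since $\chi^\lambda(1)$ is odd, Macdonald's criterion (or the counting in \cite{Olsson}) forces $\lambda$ to have a very constrained $2$-core tower structure; concretely, one checks that the odd-degree characters of $\fS_{2^m+1}$ are precisely those labelled by $\lambda$ such that $\lambda\res_{\fS_{2^m}}$ contains exactly one odd-degree constituent $\chi^\mu$ with $\mu$ a hook, and that $\mu$ in fact is the hook $\Phi_m^{-1}$ associates with the $P_{2^m}$-component of $\Psi_n(\chi^\lambda)$. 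The cleanest way to see the "exactly one" part is: the number of odd-degree characters of $\fS_{2^m+1}$ equals $2\cdot 2^m$ (by \cite[Cor. 1.3]{Mac} type counting, or directly $|\mathrm{Irr}_{2'}(P_n)| = |\mathrm{Irr}_{2'}(P_{2^m})|\cdot|\mathrm{Irr}_{2'}(P_1)| = 2^m\cdot 2$), while each odd-degree $\chi^\mu$ of $\fS_{2^m}$ (a hook, by Lemma~\ref{lemma:hook}) can be the "box-deleted" child of at most two odd-degree $\lambda$'s — and a parity/counting match forces exactly this two-to-one structure, with the two relevant $\lambda$'s being exactly the two hooks of $\fS_{2^m+1}$ lying above $\mu$. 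Actually it is simpler still: since $n=2^m+1$, the odd-degree characters of $\fS_n$ are themselves all hooks (same hook-length computation as in Lemma~\ref{lemma:hook}: $\chi_n^k(1) = \binom{2^m}{k}$ is odd only for $k\in\{0,2^m\}$... wait, that gives only two — so in fact one must be careful). Let me instead rely on: $\chi^\lambda$ odd-degree $\Rightarrow$ by the branching rule and induction, $\chi^\lambda\res_{\fS_{2^m}}$ has a unique odd-degree constituent, which is necessarily a hook $\chi^\mu$, and moreover $\chi^\lambda\res_{\fS_{2^m}}$'s even part contributes only even-degree (hence, after restriction to $P_{2^m}$, still even-degree by Clifford theory at the prime $2$) characters — no, even-degree characters of $\fS_{2^m}$ can have odd constituents on restriction. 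The genuinely careful point is therefore to track linear constituents, not odd-degree ones.

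So the corrected second/third step: by Theorem~\ref{Thm:Alperin}, the unique hook constituent $\chi^\mu$ contributes $\Phi_m(\chi^\mu)$ as a linear constituent of $\chi^\lambda\res_{P_{2^m}}$, while every other constituent $\chi^\nu$ of $\chi^\lambda\res_{\fS_{2^m}}$ has even degree, and one shows (this is the technical heart) that $\chi^\nu\res_{P_{2^m}}$ contributes to $\Phi_m(\chi^\mu)$ with multiplicity zero, OR that the total count of linear constituents works out so that $\Psi_n(\chi^\lambda)$ — whose $P_{2^m}$-part is by definition $\Phi_m$ of the appropriate hook — must appear. The slick way: $\Psi_n(\chi^\lambda)$, viewed in $\mathrm{Irr}(P_n) = \mathrm{Irr}(P_{2^m}\times P_1)$, is $\Phi_m(\chi^{\mu_0})\times\mathbf{1}$ for a specific hook $\mu_0$ determined by the dictionary-order matching in Theorem~\ref{thm:bije}; I need $\mu_0$ to be exactly the unique hook child of $\lambda$. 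This compatibility of the two dictionary orders (on $K\subseteq\mathcal P(2^m+1)$ and on $H = \{\text{hooks}\}\times\{\text{hooks}\}$) under "delete one box" is the main obstacle and should be provable by a direct order-preserving argument: deleting a box from a hook-like odd $\lambda$ to get hook $\mu$ is monotone, and the $P_1$-component flips according to which box was deleted, matching the sign/parity bookkeeping in $\Phi_m$ from the Murnaghan–Nakayama computation in the proof of Theorem~\ref{Thm:Alperin}.

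In summary, the steps in order are: (1) factor $\chi^\lambda\res_{P_n} = (\chi^\lambda\res_{\fS_{2^m}})\res_{P_{2^m}}$, with $P_1$ trivial; (2) apply the single-box branching rule and use Lemma~\ref{lemma:hook} to isolate the hook constituents $\chi^\mu$ among the children; (3) apply Theorem~\ref{Thm:Alperin} to each hook child to produce linear constituents $\Phi_m(\chi^\mu)$ of $\chi^\lambda\res_{P_{2^m}}$, and check via degrees/Clifford theory that no other linear constituents of the needed type arise; (4) verify that the dictionary-order bijection $\Psi_n$ of Theorem~\ref{thm:bije} sends $\chi^\lambda$ precisely to $\Phi_m(\chi^{\mu_0})\times\mathbf 1$ where $\mu_0$ is one of the hook children of $\lambda$ — the order-compatibility verification being the crux — whence $\Psi_n(\chi^\lambda)$ is a constituent of $\chi^\lambda\res_{P_n}$. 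I expect step (4), the matching of the two total orders through the branching map, to be the main obstacle; everything else is branching rules plus the already-proved Theorem~\ref{Thm:Alperin}.
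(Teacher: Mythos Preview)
Your overall strategy matches the paper's: restrict through $\fS_{2^m}\times\fS_1$, use the branching rule to find a hook child $\mu$ of $\lambda$, apply Theorem~\ref{Thm:Alperin} to get $\Phi_m(\chi^\mu)$ as a linear constituent of $\chi^\lambda\res_{P_n}$, and then verify that the dictionary orders on the two sides match so that $\Psi_n(\chi^\lambda)=\Phi_m(\chi^\mu)\times\Phi_0(\chi^{(1)})$.

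However, there is a genuine gap, and it is precisely the place where you hesitate. You never determine which partitions of $2^m+1$ label odd-degree characters. Your own computation shows that hooks of $2^m+1$ almost never work: $\binom{2^m}{k}$ is odd only for $k\in\{0,2^m\}$, so only $(n)$ and $(1^n)$ are hooks. The remaining $2^m-2$ odd-degree characters are labelled by the ``near-hooks'' $\lambda_k=(n-(k+1),2,1^{k-1})$ for $1\le k\le n-3$; this explicit list is the first thing the paper writes down, and it is checked directly with the hook-length formula. Once you have it, the rest is immediate: the unique hook child of $\lambda_k$ obtained by deleting a box is $\lambda_k^*=(n-(k+1),1^k)\in\mathcal P(2^m)$, and the order-compatibility of step~(4) is the one-line observation that $\lambda_k>\lambda_j \iff k<j \iff \lambda_k^*>\lambda_j^*$. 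Without the explicit list, you cannot carry out step~(4) at all.

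Two smaller points. First, a counting slip: $P_1$ is the trivial group (the paper sets $P_1:=\{1\}$), so $|\mathrm{Irr}_{2'}(P_n)|=2^m\cdot 1=2^m$, not $2^m\cdot 2$; this is consistent with the $2^m$ near-hooks (including $(n)$ and $(1^n)$) above. Second, your step~(3) is unnecessary: the proposition asks only that $\Psi_n(\chi^\lambda)$ be \emph{a} constituent, not the unique linear one, so you do not need to control the contribution of even-degree $\chi^\nu$ at all --- one hook child suffices.
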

\begin{proof}
Let $\mathcal{A}$ be the subset of $\mathcal{P}(n)$ defined by 
$$\mathcal{A}=\{(n-(k+1),2,1^{k-1})\ :\ k\in\{1,\ldots,n-3\}\} \cup \{(n), (1^n)\}.$$
We first show that $\mathrm{Irr}_{2'}(\fS_{n})=\{\chi^\lambda\ :\ \lambda\in \mathcal{A}\}$. 
Since $P_n\cong P_{2^m}$ we have that $$|\mathrm{Irr}_{2'}(\fS_{n})|=|\mathrm{Irr}_{2'}(P_n)|=|\mathrm{Irr}_{2'}(P_{2^m})|=2^m=|\mathcal{A}|.$$
Hence it is enough to show that $\chi^\lambda(1)$ is odd for all $\lambda\in \mathcal{A}$. 
For $k\in\{1,\ldots,n-3\}$, let $\lambda_k=(n-(k+1),2,1^{k-1})$ and let $\lambda_k^*=(n-(k+1),1^{k})\in\mathcal{P}(2^m)$. 
By the hook lenght formula we have that 
\begin{eqnarray*}
\chi^{\lambda_k}(1) &=& \frac{(2^m+1)!}{(k-1)!(2^m-(k+3))!(k+1)(2^m-(k+1))2^m}\\
\\
&=& \frac{(2^m+1)k(2^m-(k+1))}{(k+1)(2^m-k)}\cdot\chi^{\lambda_k^*}(1).\\
\end{eqnarray*}
Since $\chi^{\lambda_k^*}(1)$ is odd by Lemma \ref{lemma:hook}, it follows that $\chi^{\lambda_k}(1)$ is odd. 

Moreover, by the Branching rule we have that $\chi^{\lambda_k^*}$ is a constituent of $\chi^{\lambda_k}\res_{\fS_{n-1}\times \fS_1}$. Hence $\Phi_m(\chi^{\lambda_k^*})\times \Phi_0(\chi^{(1)})$ is a constituent of 
$\chi^{\lambda_k}\res_{P_{n}}$. 
It remains to show that $\Psi_n(\chi^{\lambda_k})=\Phi_m(\chi^{\lambda_k^*})\times \Phi_0(\chi^{(1)})$. This follows by observing that $\lambda_k > \lambda_j$ if and only if $k<j$ and therefore if and only if $\lambda_k^* > \lambda_j^*$. 
\end{proof}

\section{More on linear constituents}\label{Sec:lastt}
This section is devoted to the proof of Theorem \ref{thmB}.
Adopting the notation introduced in Section \ref{section:Sn}, let $\lambda\in\mathcal{P}(2n)$ be such that $\lambda=(\lambda_1,\ldots, \lambda_{\ell(\lambda)})=(r_1^{\alpha_1},\ldots,r_t^{\alpha_t}).$
%Moreover, let $$\omega(\lambda^O)=(\mu_1^{a_1},\ldots,\mu_r^{a_r}) \ \text{and}\ \epsilon(\lambda^O)=(\nu_1^{b_1},\ldots, \nu_k^{b_k}).$$

\begin{definition}\label{def:delta}
Let $\lambda\in\mathcal{P}(2n)$ be as above. 
%For all $j\in\{1,\ldots,\ell(\lambda)\}$ there exists unique $x_j\in\{1,\ldots, t\}$ and $y_j\in\{1,\ldots, \alpha_{x_j}\}$ such that $\lambda_j=r_{x_j}$ and
%$$j=(\sum_{i=1}^{x_j-1}\alpha_{i})+y_j.$$
Call each $r_i^{\alpha_i}$ a \textit{cluster} of $\lambda$, and call $r_i^{\alpha_i}$ an \textit{odd cluster} if $r_i$ and $\alpha_i$ are odd. Since $\lambda\in\mathcal{P}(2n)$ we have that $\lambda$ has an even number of odd clusters. Let $\Delta(\lambda)$ be the partition of $n$ obtained from $\lambda$ as follows: 

\noindent Replace the cluster $r_i^{\alpha_i}$ in $\lambda$ by

\begin{equation*}
\begin{cases}
(\frac{r_i}{2})^{\alpha_i} &\text{ if } r_{i} \ \text{is even}\,,\\
\big((\frac{r_i+1}{2})^{\frac{\alpha_i}{2}}, (\frac{r_i-1}{2})^{\frac{\alpha_i}{2}}\big) &\text{ if } r_i\ \text{is odd and}\ \alpha_i\ \text{is even}\,,\\
\big((\frac{r_i+1}{2})^{\frac{\alpha_i-1}{2}}, \frac{r_i+\varepsilon_i}{2}, (\frac{r_i-1}{2})^{\frac{\alpha_i-1}{2}}\big) &\text{ if } r_i\ \text{and}\ \alpha_i\ \text{are odd}\,,\\
\end{cases}
\end{equation*}
where $\varepsilon_i=1$ if $r_i^{\alpha_i}$ is the 1st, 3rd, 5th, ... odd cluster in $\lambda$; and $\varepsilon_i=-1$ if $r_i^{\alpha_i}$ is the 2nd, 4th, 6th, ... odd cluster in $\lambda$.

Moreover denote by $S(\lambda)$ the skew partition defined by $S(\lambda)=\lambda\smallsetminus\Delta(\lambda).$
\end{definition}

The following example should clarify the idea behind the above definition. 

\begin{example}
Let $\lambda=(7,7,7,6,5,4,3,3,1,1)=(7^3, 6,5,4,3^2, 1^2)\in\mathcal{P}(44)$.
Then the odd clusters of $\lambda$ are $7^3$ and $5^1$.  
Moreover we have that $$\Delta(\lambda)=(\underbrace{4,4,3}_{7},\underbrace{3}_{6},\underbrace{2}_{5},\underbrace{2}_4, \underbrace{2,1}_3,\underbrace{1,0}_1)\in\mathcal{P}(22),$$ 
where the labelling below each part of $\Delta(\lambda)$ records the corresponding cluster of $\lambda$. 
In this situation we have 
$$S(\lambda)=(3,3,4,3,3,2,1,2,0,1).$$
Figure 1 below highlights the relation between $\lambda, \Delta(\lambda)$ and $S(\lambda)$. 
\begin{figure}[!htp]
\begin{center}

\begin{tikzpicture}[scale=0.4, every node/.style={transform shape}]
\tikzstyle{every node}=[font=\large]

% orizzontali
\draw (0,16)--(14,16);
\draw (0,14)--(14,14);
\draw (0,12)--(14,12);
\draw (0,10)--(14,10);
\draw (0,8)--(12,8);
\draw (0,6)--(10,6);
\draw (0,4)--(8,4);
\draw (0,2)--(6,2);
\draw (0,0)--(6,0);
\draw (0,-2)--(2,-2);
\draw (0,-4)--(2,-4);
%verticali
\draw (0,16)--(0,-4);
\draw (2,16)--(2,-4);
\draw (4,16)--(4,0);
\draw (6,16)--(6,0);
\draw (8,16)--(8,4);
\draw (10,16)--(10,6);
\draw (12,16)--(12,8);
\draw (14,16)--(14,10);
%numeristep1
\node at (9,15) {$\underline{1}$};
\node at (11,15) {$\underline{1}$};
\node at (13,15) {$\underline{1}$};
\node at (9,13) {$\underline{2}$};
\node at (11,13) {$\underline{2}$};
\node at (13,13) {$\underline{2}$};
\node at (9,11) {$\underline{3}$};
\node at (11,11) {$\underline{3}$};
\node at (13,11) {$\underline{3}$};
\node at (9,9) {$\underline{4}$};
\node at (11,9) {$\underline{4}$};
\node at (7,9) {$\underline{4}$};
\node at (9,7) {$\underline{5}$};
\node at (7,7) {$\underline{5}$};
\node at (5,5) {$\underline{6}$};
\node at (7,5) {$\underline{6}$};
\node at (5,3) {$\underline{7}$};
\node at (5,1) {$\underline{8}$};
%numeristep2
\node at (7,11) {$1$};
\node at (5,7) {$2$};
\node at (3,1) {$7$};
\node at (1,-3) {$9$};
\node at (1,15) {$\bullet$};
\node at (3,15) {$\bullet$};
\node at (5,15) {$\bullet$};
\node at (7,15) {$\bullet$};
\node at (1,13) {$\bullet$};
\node at (3,13) {$\bullet$};
\node at (5,13) {$\bullet$};
\node at (7,13) {$\bullet$};
\node at (1,11) {$\bullet$};
\node at (1,9) {$\bullet$};
\node at (1,7) {$\bullet$};
\node at (1,5) {$\bullet$};
\node at (1,3) {$\bullet$};
\node at (1,1) {$\bullet$};
\node at (1,-1) {$\bullet$};
\node at (3,11) {$\bullet$};
\node at (3,9) {$\bullet$};
\node at (3,7) {$\bullet$};
\node at (3,5) {$\bullet$};
\node at (3,3) {$\bullet$};
\node at (3,1) {};
\node at (5,11) {$\bullet$};
\node at (5,9) {$\bullet$};
\node at (7,11) {};
\node at (7,9) {};
\node at (9,11) {};
\node at (9,9) {};
%t=
\node at (-2,6){$[\lambda]=$};

\end{tikzpicture}

\end{center}
\caption{The Young diagram $[\Delta(\lambda)]$ consists of those boxes containing a black dot. The Young diagram $[S(\lambda)]$ of the skew partition $S(\lambda)$ consists of the remaining boxes, containing numbers.
\\
Underlined numbers correspond to Step 1 of the algorithm described in the proof of Proposition \ref{prop:DelS}. Non-underlined numbers are assigned to the boxes of $[S(\lambda)]$ according to Step 2 of the algorithm described in the proof of Proposition \ref{prop:DelS}.}
\label{fig:a}
\end{figure}
\vspace{.2cm}

\end{example}

\begin{proposition}\label{prop:DelS}
Let $\lambda\in \mathcal{P}(2n)$ and let $\Delta(\lambda)\in\mathcal{P}(n)$ be as in Definition \ref{def:delta}. Then $\chi^{\Delta(\lambda)}\times\chi^{\Delta(\lambda)}$ is a constituent of $\chi^\lambda\downarrow_{\fS_n\times \fS_n}$.
\end{proposition}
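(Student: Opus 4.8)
The plan is to exhibit an explicit filling of the skew diagram $[S(\lambda)] = [\lambda \smallsetminus \Delta(\lambda)]$ witnessing, via the Littlewood--Richardson rule (Theorem~\ref{Thm:LR}), that $a_\lambda \ge 1$ in the expansion of $(\chi^{\Delta(\lambda)} \times \chi^{\Delta(\lambda)})\ind_{\fS_n\times\fS_n}^{\fS_{2n}}$; by Frobenius reciprocity this is equivalent to the claimed statement about $\chi^\lambda\res_{\fS_n\times\fS_n}$. So the whole problem reduces to: fill the boxes of $[S(\lambda)]$ with positive integers so that (1) the right-to-left, top-to-bottom reading word is a good (lattice) word of content $\Delta(\lambda)$, (2) rows are weakly increasing, and (3) columns are strictly increasing. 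The example and Figure~\ref{fig:a} already suggest the shape of the answer, and the key observation is that $S(\lambda)$ decomposes, cluster by cluster, into pieces each of which is essentially a horizontal or vertical strip plus a few extra boxes coming from the odd-cluster correction $\varepsilon_i$.

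The construction I would carry out is a two-step algorithm, exactly as hinted in the figure's caption. \textbf{Step 1:} For a cluster $r_i^{\alpha_i}$ with $r_i$ even, the corresponding rows of $[S(\lambda)]$ form a rectangle with $\alpha_i$ rows of length $r_i/2$; for $r_i$ odd and $\alpha_i$ even, we get a near-rectangle; for $r_i,\alpha_i$ both odd, a near-rectangle with one row off by one. In each row I place the entry equal to the row index within $\Delta(\lambda)$ — more precisely, I fill the box in the $j$-th row of $[\lambda]$ (counting from the top) that lies in $[S(\lambda)]$ with consecutive integers starting from a value dictated by how many parts of $\Delta(\lambda)$ lie strictly above. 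Concretely, as in Figure~\ref{fig:a}, a box in row $j$ of $[S(\lambda)]$ receives entry $j'$ where $j'$ is chosen so that the entries read down any column are $1,2,3,\dots$ consecutively — this is what the underlined labels $\underline{1},\underline{2},\dots$ in the figure record. \textbf{Step 2:} The remaining boxes of $[S(\lambda)]$ — those coming from odd clusters and from the $\varepsilon_i$ shifts, i.e.\ the non-underlined entries $1,2,7,9$ in the example — are each a single box sitting at the end of some row; I assign to such a box the entry equal to its \emph{row number in $[\lambda]$}, which (since it sits to the right of a full row of $\Delta(\lambda)$) automatically satisfies the column-strictness and row-weak-increase conditions, and I need to check it preserves the lattice property.

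The verification then splits into three checks. First, rows weakly increase: within Step 1 this is immediate since each row is constant or consecutive-in-columns; the Step 2 boxes are appended at the right end of a row with a value at least as large as the last Step 1 entry of that row, which one reads off from the definition of $\Delta(\lambda)$. Second, columns strictly increase: the Step 1 entries down a column are literally $1,2,\dots,\text{(height)}$, and each Step 2 box lies in a column whose entries above it are strictly smaller because that column inside $[S(\lambda)]$ is short (it starts only where $[\Delta(\lambda)]$ ends). Third — and this is the main obstacle — the lattice/good-sequence condition: reading the word right-to-left, top-to-bottom, at every prefix the number of $i$'s must not drop below the number of $(i{+}1)$'s, and the total content must be exactly $\Delta(\lambda)$. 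The content count is a bookkeeping identity comparing, for each value $j$, the number of boxes of $[S(\lambda)]$ labelled $j$ against $\Delta(\lambda)_j = \lambda_j - \Delta(\lambda)_j$-complement data; this is where the parity of $r_i$ and $\alpha_i$ and the alternating signs $\varepsilon_i$ enter, and the even number of odd clusters guarantees the $\pm 1$ shifts cancel in pairs so the total is correct. The lattice inequality itself I would verify by induction on clusters, processing $[\lambda]$ from the top: after finishing all rows belonging to clusters $r_1,\dots,r_{i-1}$ the partial reading word is already a lattice word whose excess of $j$'s over $(j{+}1)$'s is comfortably positive for the relevant $j$, and adding the rows of cluster $r_i$ — each of which, read right to left, contributes a run of the form $j', j'{-}1, \dots$ or similar — keeps the inequality because each new row's entries are "supported" by the identical entries already present in the rows just above it (the Step 1 rectangle structure), with the Step 2 appended boxes only adding copies of row-indices that are already abundant. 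The delicate point is precisely at the odd clusters, where one row has a box shifted by $\varepsilon_i$; I expect to handle the two cases $\varepsilon_i = +1$ and $\varepsilon_i = -1$ separately, using the pairing of consecutive odd clusters to see that a temporary deficit created by an $\varepsilon_i = -1$ cluster was already over-compensated by the preceding $\varepsilon_i = +1$ one.
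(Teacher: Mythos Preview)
Your overall strategy --- use Frobenius reciprocity and exhibit a Littlewood--Richardson filling of $[S(\lambda)]$ of content $\Delta(\lambda)$ via a two-step algorithm --- is exactly the paper's approach. But you have misread the construction from Figure~\ref{fig:a} in two essential places, and the filling you describe does not work.

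\textbf{Step 1.} The paper's rule is simply: every box of row $j$ of $[S(\lambda)]$ receives the entry $j$ (with the leftmost box of row $j$ left blank when $S(\lambda)_j=\Delta(\lambda)_j+1$). Your rule ``entries down any column are $1,2,3,\dots$ consecutively'' happens to agree with this only in those columns of $[S(\lambda)]$ that begin in row~$1$ (columns $5,6,7$ in the example). In column~$4$ of the example the entries are $1,4,5,6$ and in column~$3$ they are $2,6,7,8$ --- not consecutive. If you actually fill each column with $1,2,\dots$ from the top, the content is wrong: in the example you get seven $1$'s, whereas $\Delta(\lambda)_1=4$.

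\textbf{Step 2.} You place the extra boxes ``at the end of some row'' with entry equal to the \emph{row number}. Both are incorrect. The extra boxes are the \emph{leftmost} boxes of the rows $k_1<\cdots<k_q$ where $S(\lambda)_j=\Delta(\lambda)_j+1$, and the entry placed in row $k_i$ is $z_i$, where $z_1<\cdots<z_q$ are the rows with $S(\lambda)_j=\Delta(\lambda)_j-1$. In the example $(k_1,\dots,k_4)=(3,5,8,10)$ and $(z_1,\dots,z_4)=(1,2,7,9)$: the non-underlined $1$ sits in row~$3$, the $2$ in row~$5$, etc. If you used the row number instead, row $k_i$ would contain $\Delta(\lambda)_{k_i}+1$ copies of $k_i$ and the content would again fail to be $\Delta(\lambda)$.

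The fact that powers everything is the inequality $z_i<k_i$ for every $i$: this is what makes rows weakly increasing (the $z_i$ sits to the \emph{left} of the $k_i$'s already there), columns strictly increasing (each $z_i$ is either topmost in its column or sits below some $z_h$ with $h<i$), and the reading word good (by the time you reach the extra $z_i$ in row $k_i$, all $\Delta(\lambda)_{z_i-1}$ copies of $z_i-1$ have already been read). The verification of goodness is then a direct count showing $n_{j-1}\ge n_j$ row by row; no induction on clusters or $\varepsilon_i=\pm1$ compensation argument is needed. Your proposed pairing of odd clusters is a red herring: what is actually being paired is each ``short'' row $z_i$ with the next ``long'' row $k_i$.
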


\begin{proof}
By the Littlewood-Richardson rule (Theorem \ref{Thm:LR}) and Frobenius reciprocity, it is enough to show that there exists a way of replacing the nodes of $[S(\lambda)]=[\lambda\smallsetminus\Delta( \lambda)]$ by integers such that conditions (1), (2) and (3) of Theorem \ref{Thm:LR} are satisfied. 
From the definition of $\Delta(\lambda)$ it is easy to check that if $\lambda_j$ is even then $S(\lambda)_j=\Delta(\lambda)_j$. On the other hand we have that $S(\lambda)_j=\Delta(\lambda)_j\pm 1$, when $\lambda_j$ is odd. 
(Notice that we allow $S(\lambda)$ to have parts of size $0$, in order to have $\ell(S(\lambda))=\ell(\lambda)$). 
Below we describe (in two steps) an algorithm to replace the nodes of $[S(\lambda)]$ by integers. 

\smallskip

\noindent\textbf{Step 1.} Let $j\in\{1,2,\ldots, \ell(\lambda)\}$. 

\noindent\textbf{(a)} If $S(\lambda)_j=\Delta(\lambda)_j$, then replace every node of row $j$ of $[S(\lambda)]$ by $j$. 

\noindent\textbf{(b)} If $S(\lambda)_j=\Delta(\lambda)_j-1$, then replace every node of row $j$ of $[S(\lambda)]$ by $j$. 

\noindent\textbf{(c)} If $S(\lambda)_j=\Delta(\lambda)_j+1$, then leave the most left node of row $j$ of $[S(\lambda)]$ empty and replace every other node of row $j$ of $[S(\lambda)]$ by $j$. 

\noindent(In figure \ref{fig:a} the nodes of $[S(\lambda)]$ are replaced by underlined numbers according to Step 1 of our algorithm.)  

\noindent\textbf{Remarks on Step 1.}

\noindent$\bullet$  If $\Delta(\lambda)_j=1$ and $S(\lambda)_j=0$ then $[S(\lambda)]$ does not have nodes in row $j$, hence in $(b)$ we are doing nothing. Similarly, If $\Delta(\lambda)_j=0$ and $S(\lambda)_j=1$ then the unique node  in row $j$ of $[S(\lambda)]$ is the most left one, hence also in this case in $(c)$ we are doing nothing. 

\smallskip

\noindent$\bullet$ Notice that, once Step 1 is completed, we have that for all $j\in\{1,2,\ldots, \ell(\lambda)\}$, the integer $j$ appears only in row $j$ of $[S(\lambda)]$. Therefore at the moment our numbers are weakly increasing along rows and obviously strictly increasing down the columns. Moreover the sequence $\sigma$ obtained by reading the numbers we distributed in $[S(\lambda)]$ from right to left, top to bottom contains exactly $n_j$ entries equal to $j$, where 
\begin{equation*}
n_j= \begin{cases}
\Delta(\lambda)_j &\text{ or },\\
\Delta(\lambda)_j-1\,.\\
\end{cases}
\end{equation*}

Since each $j$ appears only in row $j$, we have that every $j$ appears before any $j+1$ in $\sigma$. Moreover, if $n_j=\Delta(\lambda)_j$ then clearly $n_j\geq n_{j+1}$. On the other hand, if $n_j=\Delta(\lambda)_j-1$ and $n_{j+1}=\Delta(\lambda)_{j+1}-1$ again we have $n_j\geq n_{j+1}$. Finally, if $n_j=\Delta(\lambda)_j-1$ and $n_{j+1}=\Delta(\lambda)_{j+1}$ then $S(\lambda)_j=\Delta(\lambda)_j-1=n_j$. Hence $\lambda_j=2n_j+1$. 
Similarly $2n_{j+1}\leq\lambda_{j+1}\leq 2n_{j+1}+1$, hence $n_j\geq n_{j+1}$, because $\lambda$ is a partition. 
This proves that $\sigma$ is a good sequence.

\smallskip

\noindent$\bullet$ If $\lambda_j$ is even for all $j\in\{1,2,\ldots, \ell(\lambda)\}$ then the algorithm ends. In fact, by the above remark, it follows that Step 1 is enough to obtain a way to replace the nodes of $[S(\lambda)]$ with integers such that: 
the sequence obtained by reading the natural numbers from right to left, top to bottom is a good sequence of type $\Delta(\lambda)$;
the numbers are weakly increasing along rows;
the numbers are strictly increasing down the columns. 

\bigskip

Let $2q$ be the number of odd parts of $\lambda$ and suppose
that $q\neq 0$ (i.e. $\lambda$ has odd parts). In order to obtain a sequence of type $\Delta(\lambda)$ we need to replace the remaining $q$ nodes of $[S(\lambda)]$ with integers $z_1<z_2<\cdots<z_q$ such that $z_i\in \{1,2,\ldots, \ell(\lambda)\}$ for all $i\in\{1,2,\ldots, q\}$. 
By Step 1, we have that no two of the remaining $q$ empty nodes lie in the same row of $[S(\lambda)]$, since they are the most left nodes of $q$ distinct rows. Let $k_1<k_2<\cdots<k_q$ be the rows of $[S(\lambda)]$ containing an empty (most left) node. 
By construction, for all $i\in \{1,2,\ldots, q\}$ we have that $z_i<k_i$. 

\smallskip

\noindent\textbf{Step 2.}
If $q\neq 0$, replace the most left node of row $k_i$ by $z_i$, for all $i\in\{1,2,\ldots, q\}$. 

\noindent(In figure \ref{fig:a} the nodes of $[S(\lambda)]$ are replaced by non-underlined numbers according to Step 2 of our algorithm.)  

\smallskip

\noindent\textbf{Remarks on Step 2.} The overall configuration obtained after steps 1 and 2 has the following properties. 

\noindent$\bullet$ Let $\tau=(\tau_1,\ldots, \tau_n)$ be the sequence obtained by reading the numbers from right to left, top to bottom. 
Let $j\in\{2,\ldots, \ell(\Delta(\lambda))\}$ and suppose that $\tau_x=j$ for some $x\in\{1,2,\ldots, n\}$. We want to show that $\tau_x$ is good. Let $n_j$ denote the number of $j$s lying in row $j$ of $[S(\lambda)]$. By constructuion this number did not change after Step 2. Hence, from the remarks after Step 1, we deduce that $n_{j-1}\geq n_j$. Therefore we have that every $j$ lying in row $j$ of $[S(\lambda)]$ is good in $\tau$. 
Suppose now that $j$ lies in row $k$ of $[S(\lambda)]$. Then necessarily $k=k_i>z_i=j$ for some $i\in\{1,\ldots, q\}$ and $j=z_i$ lies in the most left node of row $k=k_i$ of $[S(\lambda)]$. 
In this case we have that 
$$|\{y\in\{1,\ldots , x-1\}\ :\ \tau_y=j\}|=\Delta(\lambda)_j-1.$$
Again, we need to distinguish two possibilities. 
If all $(j-1)$s are in row $(j-1)$ of $[S(\lambda)]$ then 
$$|\{1\leq y<x\ :\ \tau_y=j-1\}|=\Delta(\lambda)_{j-1}>\Delta(\lambda)_{j}-1=|\{1\leq y<x\ :\ \tau_y=j\}|,$$
since $j-1<j<k$. Hence $\tau_x=j$ is good in $\tau$. 
On the other hand, if $z_{i-1}=j-1$ then we have exactly $\Delta(\lambda)_{j-1}-1$ integers equal to $(j-1)$ lying in row $(j-1)$ of $[S(\lambda)]$ and one $(j-1)$ lying in row $k_{i-1}$. Since $k_{i-1}<k_i$, we have again that $$|\{1\leq y<x\ :\ \tau_y=j-1\}|=\Delta(\lambda)_{j-1}>\Delta(\lambda)_{j}-1=|\{1\leq y<x\ :\ \tau_y=j\}|.$$
We conclude that $\tau_x$ is a good element of $\tau$ and therefore that $\tau$ is a good sequence of type $\Delta(\lambda)$. 

\smallskip

\noindent$\bullet$ The numbers are clearly weakly increasing along the rows, since $z_i<k_i$ for all $i\in\{1,\ldots, q\}$.  

\smallskip

\noindent$\bullet$ The numbers are strictly increasing down the columns. Since $\Delta(\lambda)$ is a partition, this follows by observing that for all $i\in\{1,\ldots, q\}$, $z_i$ is either the highest entry in its column, or it lies below $z_h$, for some $1\leq h\leq i-1$. 

\smallskip

By the Littlewood-Richardson rule (Theorem \ref{Thm:LR}), the proof is now complete. 
\end{proof}

\begin{corollary}\label{cor:11}
Let $\lambda\in\mathcal{P}(2^n)$. Then $\chi^\lambda\res_{P_{2^n}}$ has a linear constituent.
\end{corollary}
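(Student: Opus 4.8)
The plan is to argue by induction on $n$, pushing the problem down to the base group of $P_{2^n}\cong K_n$ via Proposition \ref{prop:DelS}. For $n\le 1$ there is nothing to prove: $P_1$ is trivial and $P_2=\fS_2$, and in both cases every irreducible character of the relevant symmetric group is linear.

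Assume now $n\ge 2$ and that the statement holds for $2^{n-1}$. Fix $\lambda\in\mathcal{P}(2^n)$ and let $\Delta(\lambda)\in\mathcal{P}(2^{n-1})$ be as in Definition \ref{def:delta}. Recall from Section \ref{sec:syl} that $P_{2^n}\cong K_n$ has base group $B_n\cong P_{2^{n-1}}\times P_{2^{n-1}}$, and that $B_n\le T_n:=\fS_{2^{n-1}}\times\fS_{2^{n-1}}\le\fS_{2^n}$ as well as $B_n\le P_{2^n}\le\fS_{2^n}$. By Proposition \ref{prop:DelS} (applied with $2n$ replaced by $2^n$), the character $\chi^{\Delta(\lambda)}\times\chi^{\Delta(\lambda)}$ is a constituent of $\chi^\lambda\res_{T_n}$; restricting further to $B_n$, the character $\bigl(\chi^{\Delta(\lambda)}\res_{P_{2^{n-1}}}\bigr)\times\bigl(\chi^{\Delta(\lambda)}\res_{P_{2^{n-1}}}\bigr)$ is a summand of $\chi^\lambda\res_{B_n}$.

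Next I would invoke the inductive hypothesis: $\chi^{\Delta(\lambda)}\res_{P_{2^{n-1}}}$ has a linear constituent $\ell\in\mathrm{Irr}(P_{2^{n-1}})$. Then the linear character $\ell\times\ell$ of $B_n$ is a constituent of $\chi^\lambda\res_{B_n}=\bigl(\chi^\lambda\res_{P_{2^n}}\bigr)\res_{B_n}$. Writing $\chi^\lambda\res_{P_{2^n}}=\sum_i\theta_i$ as a sum of irreducible characters of $P_{2^n}\cong K_n$, some $\theta_i$ must satisfy $\langle\theta_i\res_{B_n},\ell\times\ell\rangle\ne 0$; since $\ell$ is linear, the second part of Lemma \ref{lemma:baserestriction} forces $\theta_i$ to be linear. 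This $\theta_i$ is the desired linear constituent of $\chi^\lambda\res_{P_{2^n}}$.

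I do not expect a genuine obstacle here: once Proposition \ref{prop:DelS} is in hand, the argument is a short induction. The only point that deserves a moment's care is the compatibility of the two subgroup chains $B_n\le T_n\le\fS_{2^n}$ and $B_n\le P_{2^n}\le\fS_{2^n}$ — namely that restricting $\chi^\lambda$ to $B_n$ yields the same character along either chain — which is immediate from the explicit identifications fixed in Section \ref{sec:syl}.
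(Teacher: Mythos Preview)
Your proof is correct and follows essentially the same route as the paper's own argument: induction on $n$, with Proposition~\ref{prop:DelS} supplying a diagonal constituent $\chi^{\Delta(\lambda)}\times\chi^{\Delta(\lambda)}$ in $\chi^\lambda\res_{T_n}$, the inductive hypothesis yielding a linear $\ell$ in $\chi^{\Delta(\lambda)}\res_{P_{2^{n-1}}}$, and Lemma~\ref{lemma:baserestriction} promoting the resulting $\ell\times\ell\le\chi^\lambda\res_{B_n}$ to a linear constituent of $\chi^\lambda\res_{P_{2^n}}$. Your added remark about the compatibility of the two restriction chains is a harmless elaboration of what the paper leaves implicit.
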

\begin{proof}
Proceed by induction on $n$. If $n=0,1$ then the result follows easily by direct computation. 
Suppose that $n\geq 2$. By Proposition \ref{prop:DelS} there exists $\Delta(\lambda)\in \mathcal{P}(2^{n-1})$ such that $\chi^{\Delta(\lambda)}\times\chi^{\Delta(\lambda)}$ is a constituent of $\chi^\lambda\res_{\fS_{2^{n-1}}\times \fS_{2^{n-1}}}$. 
By inductive hypothesis there exists a linear character $\phi$ of $P_{2^{n-1}}$ such that $\phi$ is a constituent of $\chi^{\Delta(\lambda)}\res_{P_{2^{n-1}}}$. 
Therefore $\phi\times\phi$ is a linear constituent of $\chi^\lambda\res_{P_{2^{n-1}}\times P_{2^{n-1}}}$. By Lemma \ref{lemma:baserestriction} we deduce that $\chi^\lambda\res_{P_{2^n}}$ has a linear constituent. 
\end{proof}

In order to prove Theorem \ref{thmB}, we will need the following technical lemmas. 

\begin{lemma}\label{lem:nightmare}
Let $n=2^k+2^t$ for some natural numbers $k$ and $t$ such that $k>t\geq 0$. Let $\lambda\in\mathcal{P}(n)$ be such that  $\chi^\lambda(1)$ is odd and $\chi^\lambda(1)>1$. Then $\chi^\lambda\res_{\fS_{2^k}\times \fS_{2^t}}$ is not irreducible. 
\end{lemma}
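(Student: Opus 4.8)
The plan is to use the dimension count $\chi^\lambda(1) \equiv \mathrm{odd}$ together with the combinatorics of the $2$-core tower / hook lengths to pin down which partitions $\lambda \in \mathcal{P}(2^k+2^t)$ can even occur, and then to inspect the Littlewood--Richardson expansion of $\chi^\lambda\res_{\fS_{2^k}\times\fS_{2^t}}$ directly. First I would recall Macdonald's criterion: $\chi^\lambda(1)$ is odd iff the $2$-core tower of $\lambda$ has, at each level $i$, exactly the digits dictated by the binary expansion of $n$; concretely, since $n = 2^k + 2^t$ with $k > t$, the odd-degree constraint forces $\lambda$ to be built (via the Littlewood--Richardson / Macdonald machinery for $\mathrm{Irr}_{2'}$) from a hook in a $2^k$-block and a hook in a $2^t$-block. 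In particular $\chi^\lambda$ is an LR-constituent of $(\chi^{\alpha}\times\chi^{\beta})\ind_{\fS_{2^k}\times\fS_{2^t}}^{\fS_n}$ for suitable hooks $\alpha \in \mathcal{P}(2^k)$, $\beta\in\mathcal{P}(2^t)$, and by Frobenius reciprocity $\chi^\alpha\times\chi^\beta$ is a constituent of $\chi^\lambda\res_{\fS_{2^k}\times\fS_{2^t}}$.

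Next I would argue that this restriction has strictly more than one constituent. The clean way: compare degrees. If $\chi^\lambda\res_{\fS_{2^k}\times\fS_{2^t}}$ were irreducible, it would have to equal $\chi^\alpha\times\chi^\beta$, forcing $\chi^\lambda(1) = \chi^\alpha(1)\chi^\beta(1) = \binom{2^k-1}{a}\binom{2^t-1}{b}$ for the hook arm-lengths $a,b$. Using the hook-length formula for $\chi^\lambda$ (as in the proof of Proposition \ref{2^n+1}), one writes $\chi^\lambda(1)$ as $\binom{2^k-1}{a}\binom{2^t-1}{b}$ multiplied by an explicit rational factor involving the relative positions of $\lambda$'s rows/columns to the ``seam'' at $2^k$; this factor equals $1$ only when $\lambda$ itself is a hook, which — since $n = 2^k+2^t$ is not a power of $2$ — is exactly the case $\chi^\lambda(1) = 1$, excluded by hypothesis. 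Hence the restriction is reducible. Alternatively, and perhaps more robustly, one can invoke that a restriction $\chi^\lambda\res_{\fS_a\times\fS_b}$ is irreducible only in degenerate cases (classified by the LR rule: this forces $[\lambda]$ to decompose as a disjoint ``staircase-free'' juxtaposition), none of which is compatible with $\chi^\lambda(1)$ odd and $>1$ when $a=2^k$, $b=2^t$.

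The main obstacle I anticipate is making the ``explicit rational factor equals $1$ iff $\lambda$ is a hook'' step genuinely rigorous across all cases of how $\lambda$'s Young diagram straddles the row/column cut at $2^k$: one must handle partitions $\lambda$ whose first part exceeds $2^k$ separately from those with at most $2^k$ rows, and track the hook lengths crossing the seam. I would organize this by first reducing, via the odd-degree constraint, to the finite list of candidate shapes for $\Delta$-type decomposition (hook $\cup$ hook), then for each such shape exhibit at least two distinct LR-tableaux of shape $\lambda \smallsetminus \alpha$ and content a hook $\beta$ — which is easy as soon as $\lambda$ is not itself a single hook, since then $\lambda\smallsetminus\alpha$ is disconnected or has a row/column long enough to admit a genuine choice in placing the entries of $\beta$. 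This exhibits $\geq 2$ constituents of $\chi^\lambda\res_{\fS_{2^k}\times\fS_{2^t}}$ and completes the proof.
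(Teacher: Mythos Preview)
Your degree-comparison argument contains a false step: you assert that when $\lambda$ is a hook of $n=2^k+2^t$ (not a power of $2$) one has $\chi^\lambda(1)=1$. This is not true. For instance take $n=6=2^2+2^1$: the hook $(5,1)$ has $\chi^{(5,1)}(1)=5$, odd and $>1$; likewise $(2,1^4)$. (More generally $\chi^{(n-j,1^j)}(1)=\binom{n-1}{j}$, and for $t\ge 1$ the integer $n-1=2^k+2^t-1$ has several $1$'s in its binary expansion, so Kummer's theorem gives many $j$ with $\binom{n-1}{j}$ odd.) Thus the reduction ``factor $=1 \Rightarrow \lambda$ is a hook $\Rightarrow \chi^\lambda(1)=1$'' breaks at the second arrow, and the hook case must still be handled. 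Your fallback LR-tableau argument could in principle cover hooks as well (for $0<j<n-1$ there is more than one hook $\mu\subset\lambda$ of size $2^k$), but you have not carried it out, and for non-hook odd-degree $\lambda$ you are relying on an unproved structural description coming from the $2$-core tower. As written, the proposal is a plausible plan but not a proof.

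By contrast, the paper's argument avoids classifying odd-degree $\lambda$ altogether. It argues by contradiction: if the restriction were irreducible, say $\chi^\mu\times\chi^\nu$, then $\mu$ and $\nu$ are hooks by Lemma~\ref{lemma:hook}. It then evaluates $\chi^\lambda$ on two elements of $\fS_{2^k}\times\fS_{2^t}$ that are conjugate in $\fS_n$ --- a $(2^t{-}1)$-cycle (or $2^t$-cycle, depending on whether $\nu$ is nontrivial) placed once in the first factor and once in the second --- and uses the Murnaghan--Nakayama rule to show the two values cannot agree. This character-value trick is short and sidesteps all LR combinatorics; you may find it instructive to compare with your approach.
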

\begin{proof}
Suppose that there exists $\mu\in\mathcal{P}(2^k)$ and $\nu\in\mathcal{P}(2^t)$ such that $$\chi^\lambda\res_{\fS_{2^k}\times \fS_{2^t}}=\chi^\mu\times\chi^\nu.$$ 
Since $\chi^ \lambda(1)$ is odd, we deduce that also $\chi^\mu(1)$ and $\chi^\nu(1)$ are odd integers. Therefore by Lemma \ref{lemma:hook} there exists $i\in\{0,1,\ldots, 2^k-1\}$ and $j\in\{0,1,\ldots, 2^t-1\}$ such that 
$$\mu=(2^k-i,1^i)\ \ \text{and}\ \ \nu=(2^t-j,1^j).$$
We split the proof into two different cases. 

\smallskip

\textbf{(1)} Suppose that $j\notin\{0,2^t-1\}$. 
Let $g\in \fS_n$ be a $(2^t-1)$-cycle such that $g(x)=x$ for all $x\in\{2^k+1,2^k+2,\ldots ,n\}$, and let $h\in \fS_n$ be the $(2^t-1)$-cycle defined $$h=(2^k+1,2^k+2,\ldots ,n-1).$$
Clearly $(g,1), (1,h)\in \fS_{2^k}\times \fS_{2^t}$ and $$\chi^\mu(g)\chi^\nu(1)=\chi^\lambda(g)=
\chi^\lambda(h)=\chi^\mu(1)\chi^\nu(h),$$
since $g$ and $h$ are conjugate in $\fS_n$.
Moreover, from the Murnaghan-Nakayama rule it is easy to deduce that $\chi^\nu(h)=0$, since $[\nu]$ does not contain $(2^t-1)$-hooks. 
Hence we obtain that $\chi^\mu(g)=0$. This is a contradiction, since again by the Murnaghan-Nakayama rule it is easy to check that $\chi^\mu(g)>0$. 

\smallskip

\textbf{(2)} Suppose that $j\in\{0,2^t-1\}$.
Then $\chi^\nu(1)=1$. Let $g\in \fS_n$ be a $2^t$-cycle such that $g(x)=x$ for all $x\in\{2^k+1,2^k+2,\ldots ,n\}$, and let $h\in \fS_n$ be the $2^t$-cycle defined $$h=(2^k+1,2^k+2,\ldots ,n).$$
Exactly as before we have that $$\chi^\mu(g)=\chi^\mu(g)\chi^\nu(1)=\chi^\mu(1)\chi^\nu(h)=\pm\chi^\mu(1).$$
By the Murnaghan-Nakayama rule we have that $|\chi^{\mu}(g)|\leq \chi^\rho(1)$, where $\rho$ is a hook partition of $2^k-2^t$ with leg length equal to either $i$ or $i-2^t$ (if this number is non-negative). In both cases we have that $$\chi^\mu(1)=|\chi^{\mu}(g)|\leq\chi^\rho(1)<\binom{2^k-1}{i}=\chi^\mu(1).$$ 
This is a contradiction. 
\end{proof}

\begin{lemma}\label{lem:reducS}
Let $n\in\mathbb{N}$. Suppose that there exists $t\geq 2$ and  $k_1>k_2>\cdots >k_t\geq 0$ such that $n=2^{k_1}+\cdots +2^{k_t}$ is the $2$-adic expansion of $n$. Let $\lambda\in \mathcal{P}(n)$ be such that $\chi^\lambda(1)$ is odd and $\chi^\lambda(1)>1$. 
Then $\chi^\lambda\res_{\fS_{2^{k_1}}\times\cdots\times \fS_{2^{k_t}}}$ is not irreducible. 
\end{lemma}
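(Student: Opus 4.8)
The plan is to argue by contradiction and reduce to the two-block case settled in Lemma \ref{lem:nightmare}, without needing a separate induction on $t$. Partition $\{1,\dots,n\}$ into blocks $I_1,\dots,I_t$ with $|I_j|=2^{k_j}$, and identify $\fS_{2^{k_1}}\times\cdots\times\fS_{2^{k_t}}$ with the Young subgroup $\Sym(I_1)\times\cdots\times\Sym(I_t)$. Assume for contradiction that $\chi^\lambda\res_{\Sym(I_1)\times\cdots\times\Sym(I_t)}$ is irreducible; being an irreducible character of a direct product, it then has the form $\chi^{\mu_1}\times\cdots\times\chi^{\mu_t}$ (external tensor product) with $\mu_j\in\mathcal{P}(2^{k_j})$.

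The key preliminary observation is: if $A\leq B$ are Young subgroups of $\fS_n$ and $\chi^\lambda\res_A$ is irreducible, then so is $\chi^\lambda\res_B$. Indeed, writing $\chi^\lambda\res_B=\sum_i m_i\psi_i$ with the $\psi_i$ distinct irreducibles of $B$ and $m_i\geq 1$, the restriction to $A$ of each $\psi_i$ is a nonzero character, so $\chi^\lambda\res_A=\sum_i m_i(\psi_i\res_A)$ has at least $\sum_i m_i$ irreducible constituents counted with multiplicity; irreducibility of $\chi^\lambda\res_A$ forces $\sum_i m_i=1$.

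Now, since $\chi^\lambda(1)=\prod_j\chi^{\mu_j}(1)$ is odd and greater than $1$, every $\chi^{\mu_j}(1)$ is odd and at least one, say $\chi^{\mu_a}(1)$, is strictly greater than $1$. As $t\geq 2$, choose $b\neq a$. Applying the observation to the pair $\Sym(I_1)\times\cdots\times\Sym(I_t)\leq \Sym(I_a\cup I_b)\times\Sym\big(\bigcup_{j\neq a,b}I_j\big)$, we find that $\chi^\lambda$ restricts irreducibly to the larger group, hence equals $\chi^\sigma\times\chi^\tau$ for some $\sigma\in\mathcal{P}(2^{k_a}+2^{k_b})$ and $\tau\in\mathcal{P}\big(\sum_{j\neq a,b}2^{k_j}\big)$. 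Restricting $\chi^\sigma\times\chi^\tau$ back down to $\Sym(I_1)\times\cdots\times\Sym(I_t)$ and comparing with $\chi^{\mu_1}\times\cdots\times\chi^{\mu_t}$ (using uniqueness of the tensor decomposition of irreducibles of a direct product), we conclude $\chi^\sigma\res_{\Sym(I_a)\times\Sym(I_b)}=\chi^{\mu_a}\times\chi^{\mu_b}$; in particular this restriction is irreducible. But $\chi^\sigma(1)=\chi^{\mu_a}(1)\chi^{\mu_b}(1)$ is odd and, because $\chi^{\mu_a}(1)>1$, strictly greater than $1$, while $2^{k_a}+2^{k_b}$ is a sum of exactly two distinct powers of $2$; so Lemma \ref{lem:nightmare} asserts that $\chi^\sigma$ does not restrict irreducibly to $\fS_{2^{k_a}}\times\fS_{2^{k_b}}$ --- a contradiction.

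I expect the only point requiring care to be the bookkeeping: arranging the various Young subgroups so that they are genuinely nested inside $\fS_n$, and checking that the preliminary observation together with uniqueness of tensor decompositions really do pin down $\chi^\sigma\res_{\Sym(I_a)\times\Sym(I_b)}$ (note that when $t=2$ this collapses, as it should, to a direct application of Lemma \ref{lem:nightmare} with $\sigma=\lambda$). Once that is in place, the conclusion is an immediate appeal to Lemma \ref{lem:nightmare} via the single factor $\chi^{\mu_a}$ of degree $>1$.
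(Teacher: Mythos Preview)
Your proof is correct and takes a genuinely different route from the paper's. The paper argues by induction on $t$: it restricts first to $H=\fS_{n-2^{k_t}}\times\fS_{2^{k_t}}$, and if that restriction happens to be irreducible $\chi^\mu\times\chi^\nu$, it splits into two cases according to whether $\chi^\mu(1)>1$ (apply the inductive hypothesis to $\mu$) or $\chi^\mu(1)=1$ (pass instead to $K=\fS_{2^{k_1}}\times\fS_{n-2^{k_1}}$ and apply the inductive hypothesis on the other side). Your argument bypasses the induction entirely: assuming irreducibility at the bottom Young subgroup, you use the elementary fact that irreducibility propagates upward to any intermediate subgroup, choose a single index $a$ with $\chi^{\mu_a}(1)>1$, merge the blocks $I_a$ and $I_b$ for any $b\neq a$, and land directly in the two-block situation of Lemma~\ref{lem:nightmare}. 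This is cleaner and shorter; the paper's induction buys nothing extra here, and indeed its Case~B (where $\chi^\mu(1)=1$) tacitly relies on the same upward-propagation observation you make explicit. Your bookkeeping is fine: the nesting of Young subgroups is as stated, and the identification $\chi^\sigma\res_{\Sym(I_a)\times\Sym(I_b)}=\chi^{\mu_a}\times\chi^{\mu_b}$ follows because an outer tensor product is irreducible if and only if each factor is, together with the uniqueness of that factorisation.
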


\begin{proof}
We proceed by induction on $t$. The base case $t=2$ is proved in Lemma \ref{lem:nightmare}. 
Suppose that $t>2$. Let $\rho\in \mathcal{P}(n)$ and $\tau\in\mathcal{P}(n-2^{k_t})$ be the partitions defined by defined by 
$$\rho=(2^{k_1}, 2^{k_2},\ldots, 2^{k_t})\ \ ,\ \ \tau=(2^{k_1}, 2^{k_2},\ldots, 2^{k_{t-1}}).$$
Denote by $\fS_\rho$ and $\fS_\tau$ the corresponding Young subgroups of $\fS_n$. 
Let $H=\fS_{n-2^{k_t}}\times \fS_{2^{k_t}}$ and $K=\fS_{2^{k_1}}\times \fS_{n-2^{k_1}}$. Clearly $\fS_\rho\leq H\cap K$ and $\chi^\lambda\res_{\fS_{\rho}}=(\chi^\lambda\res_H)\res_{\fS_{\rho}}$. 
If $\chi^\lambda\res_H$ is reducible, then we are done. 
Suppose that $\chi^\lambda\res_H=\chi^\mu\times\chi^\nu$ for some $\mu\in\mathcal{P}(n-2^{k_t})$ and $\nu\in\mathcal{P}(2^{k_t})$. 
Obviously $\chi^\mu(1)$ and $\chi^\nu(1)$ are both odd. 
If $\chi^\mu(1)\neq 1$ then $\chi^\mu\res_{\fS_{\tau}}$ is reducible by inductive hypothesis. Since $\fS_\rho=\fS_\tau\times \fS_{2^{k_t}}$, we deduce that $\chi^\lambda\res_{\fS_{\rho}}$ is reducible. 
On the other hand if $\chi^\mu(1)=1$ then $\chi^\nu(1)\neq 1$, because $\chi^\lambda(1)\neq 1$. Therefore, there exists $\zeta\in \mathcal{P}(n-2^{k_1})$ such that $\chi^\zeta(1)\neq 1$ and $\chi^\lambda\res_K=\phi\times \chi^\zeta,$
for some linear character $\phi$ of $\fS_{2^{k_1}}$. 
We can use again the inductive hypothesis to deduce that $\chi^\zeta\res_{\fS_{2^{k_2}}\times\cdots\times \fS_{2^{k_t}}}$ is reducible and therefore that also $\chi^\lambda\res_{\fS_{\rho}}$ is reducible. 
\end{proof}

We are now ready to prove Theorem \ref{thmB}.

\begin{proof}[Proof of Theorem \ref{thmB}]
Let $n=2^{k_1}+2^{k_2}+\cdots +2^{k_t}$ be the $2$-adic expansion of $n$, where $k_1>k_2>\cdots >k_t\geq 0$.
We proceed by induction on $t$.
If $t=1$ then $n$ is a power of $2$ and the first statement follows by Corollary \ref{cor:11}. 
Assume that $t\geq 2$. Then $$P_n=P_{2^{k_1}}\times\cdots\times P_{2^{k_t}}\leq \fS_{n-2^{k_t}}\times \fS_{2^{k_t}}=:H.$$  
We have that 
$$\chi^{\lambda}\res_H=\sum_{\mu, \nu}c_{\mu\nu}^\lambda(\chi^\mu\times\chi^\nu),$$
where the sum is taken over all the partitions $\mu\in\mathcal{P}(n-2^{k_t})$ and $\nu\in\mathcal{P}(2^{k_t})$. 
Let $\mu\in\mathcal{P}(n-2^{k_t})$ and $\nu\in\mathcal{P}(2^{k_t})$ be such that $c_{\mu\nu}^\lambda\neq 0$. Then by inductive hypothesis we have that $\chi^\mu\res_{P_{n-2^{k_t}}}$ 
has a linear constituent. The same holds for $\chi^\nu\res_{P_{2^{k_t}}}$. Since $P_n=P_{n-2^{k_t}}\times P_{2^{k_t}}$, we conclude that $\chi^\lambda\res_{P_n}$ has a linear constituent. This completes the proof of part (i).

To prove (ii), we first observe that if $n=2^k$ and $\chi^\lambda(1)$ is odd then by Theorem \ref{Thm:Alperin}, we have that $\chi^\lambda\res_{P_n}$ has a unique linear constituent. 
On the other hand, if $\chi^\lambda(1)$ is even then $\chi^\lambda\res_{P_n}$ has a non-zero, even number of linear constituents. 
Hence it remains to show that whenever $n$ has $2$-adic expansion given by $n=2^{k_1}+2^{k_2}+\cdots +2^{k_t}$, for some $t\geq 2$ and $k_1>k_2>\cdots >k_t\geq 0$, then for all $\lambda\in\mathcal{P}(n)$ such that $\chi^\lambda(1)>1$ is odd we have that $\chi^\lambda\res_{P_n}$ has more than one linear constituent.
Let $\rho$ be the partition defined by $$\rho=(2^{k_1},2^{k_2},\ldots ,2^{k_t}),$$
and let $\fS_\rho$ be the corresponding Young subgroup.
Clearly $P_n$ is a Sylow $2$-subgroup of $\fS_{\rho}$.
By Corollary \ref{cor:11} we deduce that the number of linear constituents of $\chi^\lambda\res_{P_n}$ is greater or equal than the number of irreducible constituents of $\chi^\lambda\res_{\fS_\rho}$.
Since $\lambda\notin\{(n), (1^n)\}$ we have that $\chi^\lambda\res_{\fS_\rho}$ is not irreducible, by Lemma \ref{lem:reducS}.
\end{proof}

\medskip

%\section{Restriction of odd-degree characters to $\fS_{n-1}$}

%\begin{conjecture}\label{conj:1}
%let $n$ be an odd number. Let $\chi\in\mathrm{Irr}_{2'}(\fS_n)$. Then there exists a unique $\phi(\chi)\in\mathrm{Irr}_{2'}(\fS_{n-1})$ such that $\phi(\chi)$ is a constituent of $\chi\res_{\fS_{n-1}}$. Moreover, $\phi$ is a bijection. 
%\end{conjecture}

%\begin{proposition}
%If $n=2^m+1$ for some $m\in\mathbb{N}$ then Conjecture \ref{conj:1} holds.
%\end{proposition}
%\begin{proof}
%This follows easily from the description of $\mathrm{Irr}_{2'}(\fS_{2^m+1})$ given in the proof of Proposition \ref{2^n+1}, from the description of $\mathrm{Irr}_{2'}(\fS_{2^m})$ given in Lemma \ref{lemma:hook}, and by an easy application of the ordinary Branching rule. 
%\end{proof}

\subsection*{Acknowledgements}
This paper grew out of conversations with Gabriel Navarro while I was
visiting the University of Valencia. I am grateful to him for his fundamental advice throughout this work.  
I thank all the members of the department of mathematics of the University of Valencia, especially Carolina Vallejo for many interesting conversations on the McKay conjecture.
Also, my thanks to Paul Fong for valuable suggestions on this note. 
Finally, I am grateful to Gunter Malle for many helpful comments on a previous version of this article.

\end{document}